\def\rest{\restriction}
\def\a{\alpha}
\def\b{\beta}
\def\g{\gamma}
\def\o{\omega}
\def\d{\delta}
\def\t{\theta}
\def\morass{\mathfrak{M}}
\newcommand{\forces}{\Vdash}
\newcommand{\ef}{Ehrenfeucht-Fra\"\i{}ss\'e}
\title[Long Ehrenfeucht-Fra\"\i{}ss\'e games]{Positional strategies in long Ehrenfeucht-Fra\"\i{}ss\'e games}
\author{S. Shelah}
\address{Institute of Mathematics, Hebrew University,
Jerusalem, Israel
and
Department of Mathematics, Rutgers University, New Brunswick, NJ, USA}
\email{shelah@math.huji.ac.il}
\urladdr{http://shelah.logic.at}
\thanks{The first author would like to thank the Israel Science Foundation for partial support of this research. Publication XXX}
\author{J. V\"a\"an\"anen}
\address{Department of Mathematics and Statistics, University of Helsinki, Finland
and Institute for Logic, Language and Computation University of Amsterdam, The Netherlands}
\email{jouko.vaananen@helsinki.fi}
\urladdr{http://www.math.helsinki.fi/logic/people/jouko.vaananen}
\thanks{Research of the second author was partially supported by grant 40734 of the Academy of Finland and
by the Eurocores LogICCC LINT programme}
\author{B. Veli\v ckovi\'c}
\address{Equipe de Logique Math\'ematique,
Institut de Math\'ematiques de Jussieu,
Universit\'e Paris Diderot, Paris, France}
\email{boban@math.univ-paris-diderot.fr}
\urladdr{http://www.logique.jussieu.fr/~ boban}
\begin{document}

\bibliographystyle{plain}

\newtheorem{theorem}{Theorem}[section]
\newtheorem{lemma}[theorem]{Lemma}
\newtheorem{proposition}[theorem]{Proposition}
\newtheorem{definition}[theorem]{Definition}
\newtheorem{remarks}[theorem]{Remarks}
\newtheorem{remark}[theorem]{Remark}
\newtheorem{examples}[theorem]{Examples}
\newtheorem*{clai}{Claim}

\newtheorem{question}{Question}
\newcommand{\Q}{\mathbb{Q}}
\newcommand{\A}{\mathcal{A}}
\newcommand{\M}{\mathcal{M}}
\newcommand{\B}{\mathcal{B}}
\newcommand{\F}{\mathcal{F}}
\newcommand{\I}{\mathcal{I}}
\newcommand{\G}{\mathcal{G}}
\newcommand{\LL}{\mathcal{C}}
\newcommand{\C}{{\mathcal C}}
\newcommand{\N}{\mathcal{N}} 
\newcommand{\Pp}{\mathbb{P}} 
\newcommand{\Pcal}{\mathcal{P}}
\newcommand{\EF}{\mathrm{EF}}
\newcommand{\pla}{\forall} 
\newcommand{\ple}{\exists} 
\def\force{\Vdash}
\newcommand{\dom}[1]{\mathrm{dom}(#1)}
\newcommand{\ran}[1]{\mathrm{ran}(#1)}
\newcommand{\pisoxy}[2]{\simeq_{#1, #2}^{p}} 
\newcommand{\pisox}[1]{\simeq_{#1}^{p}}
\newcommand{\pisoxx}[1]{\pisoxy{#1}{#1}}
\newcommand{\pisok}{\pisox{\kappa}}
\newcommand{\pisokl}{\pisoxy{\kappa}{\lambda}}
\newcommand{\pisoaa}{\pisoxx{\aleph_1}}
\newcommand{\plexy}[2]{(\ple)_{#1}^{#2}} 
\newcommand{\plekl}{\plexy{\kappa}{\lambda}}
\newcommand{\pleaa}{\plexy{\aleph_1}{\aleph_1}}
\newcommand{\ee}[1]{\equiv_{#1}} 
\newcommand{\answer}{\emph{Answer.}\quad}
\newcommand{\proofsketch}{\noindent\emph{Sketch of proof.}\quad}
\newcommand{\st}{such that}
\def\qed{$\Box$\medskip

}

\def\ma{{\mathcal A}}
\def\mb{{\mathcal B}}
\def\mm{{\mathcal S}}
\newcommand\strong[2]{\simeq^{p}_{#1,#2}}
\newcommand\weak[1]{\simeq^{p}_{#1}}

\begin{abstract}
We prove that it is relatively consistent with ${\rm ZF + CH}$ that there exist two models
of cardinality $\aleph_2$ such that the second player has a winning strategy in the  \ef-game of
length $\omega_1$  but there is no $\sigma$-closed back-and-forth set for the two models. If ${\rm CH}$
fails, no such pairs of models exist.
\end{abstract}

\maketitle

\section{Introduction}

 Suppose $\ma = (A,\ldots)$ and $\mb=(B,\ldots)$ are structures for the same  vocabulary $\mathcal L$ of
 cardinality $<\kappa$. We say that a set $\mathcal I$ of partial isomorphisms between $\mathcal A$ and $\mathcal B$
has the $\kappa$-{\em back-and-forth property}  if for every $p\in \mathcal I$, and every
$A_0\subseteq A$ and  $B_0\subseteq B$ of size $<\kappa$ there is $q\in \mathcal I$ extending
$p$ such that $A_0\subseteq \dom q$ and $B_0\subseteq \ran q$.
We say that $\ma$ and $\mb$ $\kappa$-{\em partially isomorphic} and
write $\ma\weak{\kappa}\mb$ if there is a $\kappa$-back-and-forth set for \(\ma\) and \(\mb\).
The relation \(\ma\weak{\kappa}\mb\) has a metamathematical interpretation.
Namely, for regular $\kappa$ it coincides with elementary equivalence relative to
the infinitary language $L_{\infty\kappa}$. In particular, $\weak{\kappa}$ is an
equivalence relation on  the class of all $\mathcal L$-structures. If $\kappa$
is uncountable then even for models of cardinality $\kappa$ the relation
$\weak{\kappa}$ is strictly weaker than isomorphism. This was first proved by Morley (1968, unpublished,
see~\cite{MR57:2907}). For instance, for $\kappa =\aleph_1$, one can take
a pair of $\aleph_1$-like dense linear orders
one of which contains a closed copy of $\omega_1$ while the other doesn't.


In this paper we investigate a strengthening of the relation $\weak{\kappa}$.
Namely, given two cardinals $\kappa$ and $\lambda$ and two structures $\mathcal A$ and $\mathcal B$
in a vocabulary of size $<\kappa$, we say that $\mathcal A$ and $\mathcal B$ are
$(\kappa,\lambda)$-{\em partially isomorphic} and write
\(\ma\strong{\kappa}{\lambda}\mb\) if there is a
\(\kappa\)-back-and-forth set $\mathcal I$ between \(\ma\) and \(\mb\) such
that any increasing chain of length \(<\lambda\) in $\mathcal I$ has an
upper bound in $\mathcal I$. The point is that the relation \(\strong{\kappa}{\kappa}\),
unlike the weaker version \(\weak{\kappa}\), implies isomorphism in the case that the models are
of cardinality at most $\kappa$, and many classical isomorphism-proofs can be
interpreted as results about the relation \(\strong{\kappa}{\lambda}\).
Indeed, suppose \(\kappa\) is regular. Then any two
\(\eta_{\kappa}\)-sets are in the relation
\(\strong{\kappa}{\kappa}\). If they are of cardinality
\(\kappa\), they are isomorphic. Also, it is well known that any two real closed fields
whose underlying orders are of type \(\eta_{\omega_1}\) and are of cardinality $\omega_1$
are isomorphic, see \cite{MR16:993e}. In fact, if \(\kappa\) is regular then any two
real closed fields whose underlying orders are of type \(\eta_{\kappa}\) are in the relation
\(\strong{\kappa}{\kappa}\), see \cite{MR58:27456}. Another example concerns saturated
models. Any two \(\kappa\)-saturated elementary equivalent structures of
cardinality \(\kappa\) are isomorphic, and the proof shows that
any two \(\kappa\)-saturated elementary equivalent structures are
in the relation \(\strong{\kappa}{\kappa}\). Finally, consider
two \(\kappa\)-homogeneous structures $\mathcal A$ and $\mathcal B$ such that \(\ma\weak{\kappa}\mb\).
If they happen to be of cardinality \(\kappa\)
they are isomorphic and the proof goes by showing that \(\ma\strong{\kappa}{\kappa}\mb\).

\medskip

Thus, the relation $\strong{\kappa}{\kappa}$ seems like an
attractive weaker version of isomorphism. However, there are some
 simple questions concerning it that are still open. The most important one
 was raised by Dickmann~\cite{MR58:27450} and
Kueker~\cite{MR57:2905}, and asks if $\strong{\kappa}{\kappa}$ is equivalent
to elementary equivalence in some logic. In fact, it is not  clear if
$\strong{\kappa}{\kappa}$ is even transitive.
This was a serious obstacle to generalizing first order logic. In order to overcome
this Karttunen~\cite{MR82k:03051} defined {\em tree-like  partial isomorphisms}.
This leads to a transitive relation which coincides with elementary equivalence in a
certain logic called $\N_{\infty\kappa}$ and implies isomorphism for models of size $\kappa$.
One can translate Karttunen's concept in terms of  the existence of a winning strategy
in a certain Ehrenfeucht-Fra\"\i{}ss\'e game which we now describe.
To begin, we fix two regular cardinals $\kappa$ and $\lambda$
and two structures $\mathcal A$ and $\mathcal B$ in the same vocabulary $\mathcal L$
of size $<\kappa$.

\begin{definition}[$\EF_\kappa^\lambda(\A,\B)$]
  There are two players $\pla$ and $\ple$.
The game runs in $\lambda$ rounds and proceeds as follows.

{\setlength\arraycolsep{0.2em}
$$\begin{array}{c|ccccccc}
  \forall \; & A_0,B_0 & && \ldots &A_\alpha, B_\alpha&&\ldots\\
  \hline
  \exists \; & &p_0 & & \ldots &&p_\alpha&\ldots\\
\end{array}\hspace{1cm}(\alpha<\lambda)$$
}
\medskip
  \noindent At stage $\alpha < \lambda$, player $\pla$ picks $A_\alpha \subseteq A$
  and $B_\alpha \subseteq B$, both of size  $<\kappa$.
  Player $\ple$ responds by  a partial isomorphism $p_\alpha$ between a substructure
  of $\mathcal A$ of size $<\kappa$ containing $A_\alpha$ and a substructure of $\mathcal B$
  containing $B_\alpha$. We require that $p_\alpha$ extends the $p_\xi$, for $\xi <\alpha$.
  Player $\ple$ wins the game if she plays $\lambda$ rounds
  while obeying the rules. Otherwise player $\pla$ wins.
\end{definition}

We write $\A \equiv_{\kappa,\lambda}\B$  if $\ple$ has a winning
strategy in $\EF_\kappa^\lambda(\A,\B)$. This is clearly transitive.
This concept has allowed the study of infinitary languages to take off and has been
very fruitful (see e.g. \cite{MR2768176}). One of the first new results
was obtained by Hyttinen~\cite{MR88a:03086} who proved the Craig
Interpolation Theorem and other classical results for this new
logic. Still the following question remains.

\begin{question}\label{main_question}
  What is the relation between $\pisokl$ and $\equiv_{\kappa,\lambda}$?
\end{question}

Clearly, if $\ma\pisokl\mb$ then  $\A \equiv_{\kappa,\lambda}\B$. Indeed, if $\ma\strong{\kappa}{\lambda}\mb$
then there is a {\em positional} winning strategy for $\exists$ in $\EF_\kappa^\lambda(\A,\B)$,
in the sense that $\exists$ only needs to know the current position in order to know how to play and win.
Thus, Question \ref{main_question} simply asks if the converse is true.
Note that the positive answer implies that $\strong{\kappa}{\lambda}$ is transitive.
We concentrate on the first nontrivial case, namely the relation between
$\simeq^p_{\aleph_1,\aleph_1}$ and $\equiv_{\aleph_1,\aleph_1}$.
Let us first note the well known fact that $\A \equiv_{\aleph_1,\aleph_1}\B$ can be expressed
as the existence of  {\em potential isomorphism\footnote{Recall that for purely relational structures
 $\A\equiv_{\omega,\omega}\B$ is equivalent to the existence of an isomorphism of $\ma$ and $\mb$
 in {\it some} forcing extension.}}
an isomorphism in a forcing extension obtained by $\sigma$-closed forcing.

\begin{proposition}
Suppose $\A$ and $\B$ are structures in the same vocabulary $\mathcal L$.
Then $\A \equiv_{\aleph_1,\aleph_1}\B$ if and only if there is a $\sigma$-closed
forcing notion $\mathcal P$ such that $\forces_{\mathcal P}\A \cong \B$.
\qed
\end{proposition}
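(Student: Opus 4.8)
The plan is to prove the two implications separately, in each case translating between decisions made in a $\sigma$-closed forcing and legal responses in the game.

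For the direction from forcing to the game, suppose $\mathcal P$ is $\sigma$-closed and $\dot f$ is a $\mathcal P$-name \st{} $\forces_{\mathcal P}$ ``$\dot f$ is an isomorphism of $\A$ onto $\B$''. I would describe a winning strategy for $\ple$ in $\EF_{\aleph_1}^{\aleph_1}(\A,\B)$ in which, alongside her partial isomorphisms $p_\alpha$, she maintains a descending sequence of conditions $q_\alpha\in\mathcal P$ with $q_\alpha\forces p_\alpha\subseteq\dot f$. At a round $\alpha$, having a lower bound $q$ of all previous $q_\xi$ (which exists because $\mathcal P$ is $\sigma$-closed and $\alpha$ is countable), she works below $q$ and, in countably many steps using $\sigma$-closure again, passes to a condition $q_\alpha\le q$ deciding $\dot f(a)$ for every $a\in A_\alpha$ and $\dot f^{-1}(b)$ for every $b\in B_\alpha$. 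Letting $p_\alpha$ record these decisions together with $\bigcup_{\xi<\alpha}p_\xi$, the fact that $q_\alpha$ forces $\dot f$ to be an isomorphism, combined with the absoluteness of atomic formulas and the forced injectivity of $\dot f$, guarantees that $p_\alpha$ is a genuine partial isomorphism extending the earlier ones; hence $\ple$ survives all $\omega_1$ rounds.

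For the converse, suppose $\tau$ is a winning strategy for $\ple$. I would let $\mathcal P$ be the set of partial plays of $\EF_{\aleph_1}^{\aleph_1}(\A,\B)$ of countable length in which $\ple$ follows $\tau$, ordered so that a longer play is a stronger condition. Since a winning strategy never leaves $\ple$ stuck, every condition can be extended by one more round, and since a countable increasing union of such partial plays is again a partial play of countable length that is still $\tau$-consistent at the new limit stages, $\mathcal P$ is $\sigma$-closed. Given a generic $G$ (which is a chain, as any two compatible plays are comparable), the union of the moves $p_\xi$ appearing along the generic branch is a single partial map $f$, and it remains to check that $\dom f=A$ and $\ran f=B$.

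The main work, and the step I expect to be the real obstacle, is exactly this coverage claim, which is also where the possibility that the models are large (e.g.\ of size $\aleph_2$) must be accommodated. For each $a\in A$ the set $D_a$ of conditions in whose play $a$ already occurs in some $A_\xi$ (hence in $\dom{p_\xi}$) is dense, since from any position $\pla$ may play $a$ at the next round and $\tau$ answers legally; symmetrically one gets dense sets for each $b\in B$. As $G$ meets all $|A|+|B|$ of these dense sets, every $a$ lands in $\dom f$ and every $b$ in $\ran f$, so $f$ has domain $A$ and range $B$; being an increasing union of partial isomorphisms, it preserves and reflects the atomic diagram and is therefore an isomorphism. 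Note that meeting all the $D_a$ forces the generic branch to have length $\omega_1$ and, when $|A|>\aleph_1$, yields a surjection of $\omega_1$ onto $A$, so $\mathcal P$ collapses the cardinality of the models down to $\aleph_1$; this is harmless, since $\sigma$-closed forcings are permitted to collapse cardinals above $\aleph_1$. The remaining verifications, that the decided values in the first direction cohere into an injective structure-preserving map and that $\tau$-consistency genuinely passes to unions at limits, are routine and follow again from absoluteness of atomic formulas.
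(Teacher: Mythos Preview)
Your argument is correct in both directions and is the standard proof of this folklore equivalence. The paper itself does not prove the proposition at all: it is introduced as a ``well known fact'' and closed immediately with a \qed, so there is no approach on the paper's side to compare against. Your write-up would serve perfectly well as the omitted proof; the only point worth tightening is the remark that a lower bound for $\{q_\xi:\xi<\alpha\}$ exists ``because $\mathcal P$ is $\sigma$-closed and $\alpha$ is countable'' --- strictly speaking $\sigma$-closure gives lower bounds for descending $\omega$-sequences, and one should note (as is routine) that this yields lower bounds for descending sequences of any countable length by passing to a cofinal $\omega$-subsequence.
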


We recall the following results from \cite{vv} where the equivalence of
$\simeq^p_{\aleph_1,\aleph_1}$ and $\equiv_{\aleph_1,\aleph_1}$
has been established in some special cases.

\begin{theorem}
  Suppose $\A$ and $\B$ are two structures in the same vocabulary $\mathcal L$.
  Then $\A\simeq^p_{\aleph_1,\aleph_1}\B$ and
  $\A \equiv_{\aleph_1,\aleph_1} \B$ are equivalent in any of the following cases:
  \begin{enumerate}

\item  $|\A|,|\B|\leq 2^{\aleph_0}.$
\item  $\A$ and $\B$ have different cardinality.
\item $\A$ and $\B$ are trees of height $\aleph_1$. \qed
\end{enumerate}

\end{theorem}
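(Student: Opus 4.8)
The plan is to prove the two directions separately. The implication $\A \simeq^p_{\aleph_1,\aleph_1}\B \Rightarrow \A \equiv_{\aleph_1,\aleph_1}\B$ holds in general and is already recorded in the discussion preceding the theorem (a $\sigma$-closed back-and-forth set is exactly a positional winning strategy for player $\exists$), so in each of the three clauses the work is the converse $\A \equiv_{\aleph_1,\aleph_1}\B \Rightarrow \A \simeq^p_{\aleph_1,\aleph_1}\B$. For this I would start from the Proposition: fix a $\sigma$-closed forcing $\mathcal P$ and a name $\dot f$ with $\forces_{\mathcal P}\dot f\colon\A\cong\B$, and consider the candidate
$$\mathcal I=\{p : p\text{ is a countable partial isomorphism and } \exists q\in\mathcal P\ (q\forces_{\mathcal P}\check p\subseteq\dot f)\}.$$
That $\mathcal I$ has the $\aleph_1$-back-and-forth property is routine: given $p\in\mathcal I$ with witness $q$ and countable $A_0,B_0$, enumerate $A_0\cup\dom{p}$ and $B_0\cup\ran{p}$ and build a descending $\omega$-chain of conditions below $q$ deciding $\dot f$ and $\dot f^{-1}$ on these points; $\sigma$-closure of $\mathcal P$ yields a lower bound whose decided values form a $p^*\in\mathcal I$ extending $p$ with $A_0\subseteq\dom{p^*}$ and $B_0\subseteq\ran{p^*}$ (here $\sigma$-closure also guarantees $p^*\in V$, as $\mathcal P$ adds no new countable sequences).

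The whole difficulty is therefore concentrated in one step: showing that $\mathcal I$, or some back-and-forth subset of it, is $\sigma$-closed, i.e. closed under unions of countable increasing chains. This can fail for the naive $\mathcal I$: the witnesses for the members of a chain $p_0\subseteq p_1\subseteq\cdots$ may be pairwise incompatible, so that no single condition forces $\bigcup_n p_n\subseteq\dot f$; in fact then $\forces_{\mathcal P}\bigcup_n p_n\not\subseteq\dot f$. This is precisely the phenomenon the main (consistency) result of the paper shows cannot be removed in general, so each clause must use its hypothesis exactly to defeat it. I expect this $\sigma$-closure step to be the main obstacle throughout.

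For trees (clause (3)) I would not use $\dot f$ at all but build $\mathcal I$ intrinsically. Take $\mathcal I$ to consist of the level- and order-preserving countable partial maps $p$ such that for every matched pair $(a,p(a))$ the cones above $a$ and above $p(a)$ again satisfy $\equiv_{\aleph_1,\aleph_1}$ (this is what a winning strategy gives when play is restricted to a cone). $\sigma$-closure is then automatic, since every matched pair in a union $\bigcup_n p_n$ already occurs in some $p_n$ and so inherits the cone condition; the back-and-forth property follows by using cone-equivalence at the nearest matched ancestor to find a partner for a new node, the induction being driven by the levels, all of which lie below $\omega_1$. For small models (clause (1)) the extra leverage is that a $\sigma$-closed $\mathcal P$ adds no reals, so every countable partial isomorphism occurring in the extension already lies in $V$ and, since $|\A|,|\B|\le 2^{\aleph_0}$, there are only $2^{\aleph_0}$ of them; the plan is to prune $\mathcal I$ to a $\sigma$-closed back-and-forth subset by a fusion/bookkeeping recursion of length $2^{\aleph_0}$ that enumerates all extension tasks together with all potential ``bad'' chains, sealing off the latter while preserving back-and-forth. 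The delicate point is that this recursion must close up at limit stages without destroying extendibility.

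For different cardinalities (clause (2)), the interesting case is when one model exceeds $2^{\aleph_0}$, so that (1) does not already apply; say $|\A|<|\B|$. Here $\forces_{\mathcal P}\A\cong\B$ forces $\mathcal P$ to collapse cardinals in $(|\A|,|\B|]$, and the homogeneity of such a collapse should let one choose $\dot f$ so that the ``forth'' demand on the larger side can always be met afresh, removing the dependence on past choices responsible for incompatible witnesses and thereby making $\mathcal I$ $\sigma$-closed. In all three clauses the crux is the same single inference --- turning the mere existence of a winning strategy (equivalently, a generic isomorphism) into closure of the back-and-forth set under countable unions --- and it is there that I would expect essentially all of the effort to lie.
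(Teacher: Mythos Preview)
This theorem is not proved in the present paper. It is quoted from \cite{vv} and printed with a bare \qed; the authors say explicitly, ``We recall the following results from \cite{vv}\ldots''. There is therefore no proof here against which to compare your proposal.

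That said, your outline correctly isolates the single nontrivial step (closure of the back-and-forth family under countable increasing unions), but parts of it remain at the level of hopes rather than arguments. Clause~(3) is the most convincing: the cone-by-cone description does give a family whose $\sigma$-closure is immediate because the defining condition is local to pairs already present. For clause~(1), your ``fusion/bookkeeping recursion of length $2^{\aleph_0}$'' is only a slogan; you have not explained why sealing off bad chains can be done without destroying the back-and-forth property, which is precisely the tension you yourself flag as delicate. For clause~(2), the appeal to ``homogeneity of the collapse'' is not an argument: the $\sigma$-closed $\mathcal P$ witnessing $\equiv_{\aleph_1,\aleph_1}$ is arbitrary, not a L\'evy collapse, and even if one substitutes a collapse it is unclear how homogeneity manufactures a single condition below an arbitrary countable chain of incompatible witnesses. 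To complete these clauses you will need the actual arguments from \cite{vv}.
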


On the basis of these results it seems interesting to investigate the case when
$\A$ and $\B$ are of cardinality $\aleph_2$ and $\mathrm{CH}$ holds. Even in this case we can  have
a positive result if we look at partial isomorphisms of size
$\aleph_1$ rather than of size $\aleph_0$. The following result was proved in  \cite{vv}.

\begin{theorem}
  Suppose $\A$ and $\B$ are two structures of cardinality $\aleph_2$ in the same vocabulary $\mathcal L$.
  Then $\A \simeq^p_{\aleph_2,\aleph_1}\B$ if and only if $\A \equiv_{\aleph_2,\aleph_1}\B$.
  \qed
\end{theorem}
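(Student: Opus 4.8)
The forward implication $\A\simeq^p_{\aleph_2,\aleph_1}\B\Rightarrow\A\equiv_{\aleph_2,\aleph_1}\B$ is exactly the positional-strategy observation already recorded above: a back-and-forth set closed under countable chains lets $\exists$ play positionally---extend to cover $\forall$'s demand at successor stages and take unions at limits---so only the converse requires work. The plan is therefore to fix a winning strategy $\tau$ for $\exists$ in $\EF_{\aleph_2}^{\aleph_1}(\A,\B)$ and to manufacture from $\tau$ an $\aleph_2$-back-and-forth set $\mathcal I$, consisting of partial isomorphisms of size $\le\aleph_1$, that is closed under increasing chains of countable length.

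The natural first candidate is to let $\mathcal I$ be the set of \emph{winning positions}: those partial isomorphisms $p$ of size $\le\aleph_1$ from which $\exists$ still has a winning strategy in the remaining game. Since the tail $[\alpha,\omega_1)$ of the round sequence has order type $\omega_1$, and the rules only constrain future moves to extend the committed map $\bigcup_{\xi<\alpha}p_\xi$, whether $\exists$ can win depends only on this committed map, not on the round number or the history---so $\mathcal I$ is well defined. The $\aleph_2$-back-and-forth property is then immediate: given $p\in\mathcal I$ and sets $A_0,B_0$ of size $\le\aleph_1$, let $\forall$ demand $(A_0,B_0)$; $\exists$'s winning strategy returns some $q\supseteq p$ covering them, and since she is still winning, $q\in\mathcal I$. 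The feature that makes closure plausible is that \emph{every} limit ordinal below $\omega_1$ has cofinality $\omega$: a strategy winning a game of length $\omega_1$ must already survive all countable limits, and it is precisely this surviving-at-limits behaviour that I must convert into upper bounds for countable chains.

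Closure is the crux. Given an increasing chain $\langle q_n:n<\omega\rangle$ in $\mathcal I$ with union $q$ (a genuine partial isomorphism of size $\le\aleph_1$, the $q_n$ being coherent), I want some $r\in\mathcal I$ with $r\supseteq q$. The approach is to simulate a single play in which $\exists$'s committed map is driven up through the $q_n$ and then to read off $\tau$'s response at the stage-$\omega$ limit. Here lies the main obstacle: the members $q_n$ are, a priori, reached along \emph{different} plays, and $\tau$ need not reproduce the specific maps $q_n$---when forced to cover $\dom{q_{n+1}}$ and $\ran{q_{n+1}}$ it may assign different values, so the committed map at the limit need not equal $q$, nor even extend it. This coherence (or disagreement) problem is the entire difficulty, and it is exactly what prevents the raw set of winning positions from being closed under $\subseteq$-chains.

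To defeat it I would not take all winning positions but instead build $\mathcal I$ coherently, by recursion, carrying with each $p\in\mathcal I$ a witnessing initial play of $\tau$ reaching $p$. Two devices drive the construction. First, fix well-orderings of $A$ and $B$ and always have $\exists$ respond with the well-ordering-least admissible extension covering the current demand; this makes her move a function of the committed map and the demand, so that two fused plays fed the same demands in the same order cohere automatically. Second, to close a countable chain, fuse its witnessing plays into one $\tau$-play by interleaving the demands $(\dom{q_n},\ran{q_n})$ and using reflection: passing to an elementary submodel $M\prec H(\theta)$ of size $\aleph_1$ with $\A,\B,\tau$ and the chain in $M$ and $\omega_1\subseteq M$ should guarantee that the canonical responses stay inside $M$ and that the limit response is again a winning position extending $q$. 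Managing this fusion at every countable limit while keeping $\mathcal I$ back-and-forth is where essentially all the labour goes; the argument is tuned to the size-$\aleph_1$ calibration of the partial isomorphisms and hence to the cofinality-$\omega$ structure of $\omega_1$, which is what leaves room for the paper's companion result that the analogous equivalence can fail for countable partial isomorphisms.
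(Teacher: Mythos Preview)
The paper does not prove this theorem in the main text; it is cited from \cite{vv} and marked with a \qed. However, the source file contains (after \verb|\end{document}|) a proof sketch, and that sketch takes a route quite different from yours. It forms the tree $T$ of partial $\tau$-plays whose resulting partial isomorphisms have domain and range equal to some ordinal $\alpha<\aleph_2$, and then splits into two cases on the antichain structure of the induced tree $P$ of partial isomorphisms: either some cone $P_s$ has no antichain of size $\aleph_2$, in which case $P_s$ is Souslin-like and results on games on Boolean algebras produce a $\sigma$-closed subtree; or every cone has an $\aleph_2$-antichain, in which case this abundance of incompatible extensions lets one arrange that the partial isomorphism itself determines the order in which moves were made, so that a $\sigma$-closed back-and-forth set of plays can be built directly. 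Your ``canonical responses plus elementary submodel'' plan is essentially an attempt to carry out the second case globally, without the antichain hypothesis that makes it go through.

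This is where your proposal has a genuine gap. You correctly isolate the coherence obstacle: given a chain $q_0\subseteq q_1\subseteq\cdots$ in $\mathcal I$, each $q_n$ is witnessed by its own $\tau$-play, and these plays need not be compatible. Your remedy is to fix well-ordering-least responses and then ``fuse the witnessing plays by interleaving the demands''. But interleaving the demand sequences of two distinct plays produces a \emph{new} play whose canonical responses will in general differ from all of the $q_n$: the least extension covering $\dom{q_0}\cup\ran{q_0}$ need not be $q_0$, and after that first divergence nothing forces the limit to extend $q=\bigcup_n q_n$. Carrying a witnessing play with each $p\in\mathcal I$ does not help unless you also arrange that the witnesses along a chain are \emph{nested} as plays, and once you require that, $\mathcal I$ collapses to (a cofinal subset of) the tree of $\tau$-plays and you are back to needing a $\sigma$-closed subtree---precisely what the paper's Case~1/Case~2 dichotomy is designed to produce. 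The elementary-submodel reflection you invoke does not supply the missing coherence; it only controls the \emph{size} of responses, not their \emph{values}. So the proposal identifies the right difficulty but does not overcome it.
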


The main result of this paper is that the relations $\simeq^p_{\aleph_1,\aleph_1}$ and
$\equiv_{\aleph_1,\aleph_1}$ may not be equivalent  for structures of size $\aleph_2$.

\begin{theorem}\label{main}
  It is relatively consistent with $\rm ZFC + \rm CH$ that there exist two relational
  structures $\A$ and $\B$ of cardinality $\aleph_2$ in a countable vocabulary
  such that $\A\equiv_{\aleph_1,\aleph_1}\B$ and $\A \not \simeq ^p_{\aleph_1,\aleph_1} \B$.
  \qed
  \end{theorem}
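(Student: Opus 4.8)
The plan is to use the Proposition, which identifies $\A\equiv_{\aleph_1,\aleph_1}\B$ with the existence of a $\sigma$-closed forcing $\Pcal$ such that $\forces_{\Pcal}\A\cong\B$. So to prove Theorem~\ref{main} it suffices to produce, in a model of $\mathrm{CH}$, two relational structures $\A,\B$ of size $\aleph_2$ in a countable vocabulary that are \emph{potentially isomorphic by $\sigma$-closed forcing} but carry \emph{no} $\sigma$-closed back-and-forth set, i.e.\ for which $\A\not\simeq^p_{\aleph_1,\aleph_1}\B$. I would build such a pair inside a ground model of $\mathrm{GCH}$ that carries an $(\omega_1,1)$-morass $\morass$, using $\morass$ as scaffolding to assemble the $\aleph_2$-sized structures from coherent countable fragments; the morass transition maps $\gnd{i}{j}$ provide the amalgamation that keeps the countable pieces compatible all the way up to $\omega_2$, while the use of countable approximations preserves $\mathrm{CH}$.

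The separation rests on the following mechanism, which I would isolate first. The witnessing $\sigma$-closed forcing $\Pcal$ can be taken to consist of countable partial isomorphisms \emph{decorated with side conditions (promises about future limits)}, and it is precisely these side conditions that are responsible for the $\sigma$-closure of $\Pcal$. When one forgets the decorations, the underlying countable partial isomorphisms still form an $\aleph_1$-back-and-forth set, but this set need \emph{not} be closed under countable increasing unions: a chain whose decorations were pulling in incompatible directions has a union that is no longer extendible within the family. Thus potential isomorphism by $\sigma$-closed forcing does not by itself deliver a $\sigma$-closed back-and-forth set, and the real task is to arrange $\A,\B$ so that \emph{no} subfamily of countable partial isomorphisms is simultaneously back-and-forth and $\sigma$-closed.

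To enforce this I would use the $\diamondsuit$-sequence available under $\mathrm{GCH}$ on the $\omega_1$-cofinal points of $\omega_2$ to anticipate, along the morass recursion, every candidate back-and-forth set $\I$. At a stationary set of levels where $\diamondsuit$ correctly guesses a fragment of $\I$, I would plant a local obstruction in the countable fragments of $\A$ and $\B$ being built: an obstruction that forces $\I$ to contain an increasing $\omega$-chain $p_0\subseteq p_1\subseteq\cdots$ of partial isomorphisms whose union $p$ admits some point of $A$ (or of $B$) that cannot be captured by any extension of $p$ lying in $\I$. Since $\sigma$-closure of a back-and-forth set is exactly the demand that such limits remain extendible inside the family, reflecting each candidate on a stationary set of levels and defeating it there rules out every $\sigma$-closed back-and-forth set, giving $\A\not\simeq^p_{\aleph_1,\aleph_1}\B$.

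The main obstacle is the tension between the two requirements: the obstructions that break $\sigma$-closure of every back-and-forth set must not also break the witnessing forcing $\Pcal$, since I still need $\A\equiv_{\aleph_1,\aleph_1}\B$. The resolution I would pursue is that $\Pcal$'s side conditions commit in advance to how each countable limit is to be taken and so \emph{dodge} the planted obstructions, whereas an arbitrary back-and-forth set carries no such commitments and is caught by $\diamondsuit$. Making this coexistence precise---verifying that the decorated poset $\Pcal$ stays $\sigma$-closed and still forces $\A\cong\B$ while the obstructions remain sparse enough along the $\omega_1$ rounds, and checking that the guessing genuinely catches every candidate---is where the real difficulty lies, and is the step I expect to consume most of the argument.
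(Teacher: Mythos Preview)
Your proposal is a recognizable template---build along a morass, anticipate every candidate $\I$ via $\diamondsuit$, plant obstructions, and protect a decorated $\sigma$-closed forcing with side conditions---but it is a plan, not a proof: the shape of the obstructions, the nature of the side conditions, and the reason the latter dodge the former are all left unspecified, and you yourself flag this coexistence as ``where the real difficulty lies.'' As written there is nothing one could check.

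The paper's route is quite different and avoids all of $\diamondsuit$, side conditions, and a recursive construction of $\A,\B$ that diagonalizes against candidate families. It first abstracts the problem away from structures: it introduces a \emph{persistency game} of length $\omega_1$ on a family $\F\subseteq\Fn(\omega_2,\omega_1,\omega_1)$ and reduces the theorem to finding an $\F$ that is \emph{strategically persistent} yet has \emph{no $\sigma$-closed persistent subfamily}. Such an $\F=\F(\morass)$ is read off any simplified $(\omega_1,1)$-morass $\morass$ via the morass-induced partial order $\preceq$ on $\omega_2$ (roughly, $f\in\F$ iff each fiber $f^{-1}\{\alpha\}$ is $\preceq$-bounded, plus a coherence condition). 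Strategic persistency holds for every morass; the failure of $\sigma$-closed persistency requires that $\morass$ be \emph{stationary} and satisfy an \emph{$\aleph_2$-antichain condition} on $(\omega_2,\preceq)^\omega$, and these are obtained not from $\diamondsuit$ but by forcing $\morass$ with the standard countable-approximation poset and analyzing the extension.

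Given $\F$, the structures are built once, algebraically. One defines a single structure $\C$ with universe $\omega_2$ together with layers $\G_u=[\F_u]^{<\omega}$ for $u\in[\omega_2]^\omega$, each carrying the affine structure of the Boolean group $(\G_u,\triangle)$ and linked by projection relations; $\A$ and $\B$ are $\C$ expanded by two different constants in the layer $\G_\omega$. The rigidity forces every countable partial isomorphism to act on each layer as a shift by a fixed finite set $a_\psi\subseteq\F$, so a $\sigma$-closed back-and-forth set would yield a $\sigma$-closed persistent subfamily of $\F$; conversely, the winning strategy in the persistency game translates directly into a winning strategy for $\exists$ in $\EF_{\aleph_1}^{\aleph_1}(\A,\B)$. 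Thus the separation is driven by the combinatorics of the generic morass and an algebraic encoding, not by anticipating and killing individual candidates.
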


The remainder of the paper is organized as follows. In \S2 we introduce the persistency
game played on a given family of countable partial functions from $\omega_2$ to $\omega_1$.
Given an $(\omega_1,1)$-simplified morass $\morass$ we define a family $\F =\F(\morass)$
which is strategically persistent. If $\morass$ is a generic morass we show that
$\F$ does not have a $\sigma$-closed persistent subfamily.
In \S3 we use the family $\F$ from the previous section to define two structures
$\A$ and $\B$ such that $\A \equiv_{\aleph_1,\aleph_1}\B$. If $\F$ is derived
from a generic morass we show that $\A \not \simeq ^p_{\aleph_1,\aleph_1} \B$.
Finally, in \S 4 we state some open questions and directions for further research.

\section{Persistent families of functions}

In this section we change the original problem and instead of considering the Ehrenfeucht-Fra\"{i}ss\'e
game on a pair of structures, we consider a certain game on a given family of countable
partial functions  from $\omega_2$ to $\omega_1$.

 \def\Fn{\rm Fn}

 Let $\Fn(\omega_2, \omega_1, \omega_1)$ be the collection of all countable partial
functions from $\omega_2$ to $\omega_1$. We say that a subfamily $\F$ of
$\Fn(\omega_2, \omega_1, \omega_1)$ is {\em persistent} if for every $p\in \F$
and $\alpha < \omega_2$ there is $q\in \F$ extending $p$ such that $\alpha \in \dom q$.
We will also consider the following {\em persistency game} on $\F$.

\begin{definition}[$\mathcal G_{\omega_1}(\F)$] \label{persistency}
Suppose $\F$ is a subfamily of $\Fn(\omega_2, \omega_1, \omega_1)$. The game
$\mathcal G_{\omega_1}(\F)$ is played by players $\pla$ and $\ple$ and
runs as follows:

{\setlength\arraycolsep{0.2em}
$$\begin{array}{c|cccccccc}
  \forall & \alpha_0 & &\alpha_1 && \ldots &\alpha_\xi&&\ldots\\
  \hline
  \exists & &p_0 & &p_1 & \ldots &&p_\xi&\ldots\\
\end{array}\hspace{2mm}(\xi<\omega_1)$$
}
\noindent At stage $\xi$ player $\pla$ plays an ordinal $\alpha_\xi< \omega_2$
and  $\ple$ plays $p_\xi \in \F$ extending $p_\eta$, for $\eta <\xi$,
such that $\alpha_\xi \in \dom  {p_{\xi}}$.
$\ple$ wins the game if she is able to play $\omega_1$ moves. Otherwise,
$\pla$ wins.
\end{definition}

We say that $\F$ is {\em strategically persistent} if $\ple$ has
a winning strategy in $\mathcal G_{\omega_1}(\F)$.
One way to guarantee the existence of a winning strategy for $\ple$
is that there exist a persistent subfamily $\mathcal D$ of $\F$ which is
$\sigma$-{\em closed}, i.e. for every sequence $(p_n)_n$ which is increasing
under inclusion there is $q\in \mathcal D$ such that $p_n\subseteq q$, for all $n$.
Indeed, given such a family $\mathcal D$, $\ple$ has a trivial winning
strategy in $\mathcal G_{\omega_1}(\F)$: at stage $\xi$ she plays
any $p_\xi \in \mathcal D$ which extends $\bigcup_{\eta<\xi}p_\eta$ and
such that $\alpha_\xi \in \dom {p_\xi}$.
The main goal of this section is to show that it is relatively consistent
with ${\rm ZFC}$ that there exist a downward closed family $\F$ which is
strategically persistent but does not have a $\sigma$-closed persistent
subfamily. Indeed, given a simplified $(\omega_1,1)$-morass $\morass$
we can read off a certain family $\F=\F(\morass)$ which is strategically
persistent. If $\morass$ is obtained by the standard forcing construction
we show that $\F$ does not have a $\sigma$-closed persistent subfamily.

We start by recalling the relevant definitions from  Velleman~\cite{MR85i:03162}.



\def\ta{\theta_\alpha}
\def\tb{\theta_\beta}

\begin{definition}[\cite{MR85i:03162}]\label{def1} A simplified $(\omega_1,1)$-morass is a pair
$$
\morass=\langle \langle \t_\alpha:\alpha\le\omega_1\rangle,
\langle \F_{\a,\b}:\a<\b\le\o_1\rangle \rangle,
$$
where $\langle \t_\alpha:\alpha\le\omega_1\rangle$ is a sequence of countable ordinals,
$\F_{\a,\b}$ is a family of order preserving embeddings from $\ta$ to $\tb$,  for $\a<\b\le\o_1$,
and the following conditions are satisfied:

\begin{figure}\begin{center}
\begin{picture}(160,95)(20,0)
  \put(25, 95){$\omega_1$}
  \put(20, 45){$\alpha$}
  \put(20, 65){$\beta$}
  \put(30, 25){\line(0,1){60}}
  \put(30, 25){\line(1,0){40}}
  \put(30, 45){\line(1,0){70}}
  \put(30, 65){\line(1,0){100}}
  \put(30, 85){\line(1,0){140}}
  \put(180, 85){$\theta_{\omega_1}=\omega_2$}
  \put(140, 65){$\theta_\beta$}
  \put(110, 45){$\theta_\alpha$}
  \put(30, 0){$\theta_\alpha$ is the $\alpha$-th approximation of  $\omega_2$}
\end{picture}
\end{center}
\caption{A simplified morass.\label{mor}}
\end{figure}

\smallskip
\begin{enumerate}
\item{}{{\bf (Successor)}}
For every $\alpha$ there are $\gamma_\alpha,\eta_\alpha \leq\ta$ such that $\ta = \gamma_\alpha +\eta_\alpha$,
$\theta_{\alpha+1}=\ta + \eta_\alpha$ and
$\F_{\alpha, \alpha + 1} = \lbrace {\rm id}_{\ta}, s_\alpha \rbrace$, where ${\rm id}_{\ta}$ is the identity
on $\ta$ and $s_\a \rest\g_\a={\rm id}_{\gamma_\alpha}$ and $s_\a(\g_\a +\xi)=\ta +\xi$, for
all $\xi<\eta_\alpha$. We call $s_\alpha$ the {\em shift} at $\alpha$. (Figure~\ref{shift}). 

\begin{figure}[h]
\begin{center}
\begin{picture}(220, 50)
  \put(12, 8){$\alpha$}
  \put(2, 38){$\alpha + 1$}
  \put(30, 10){\line(1,0){120}}
  \put(30, 40){\line(1,0){180}}

  \put(90, 10){\line(0,1){30}}
  \put(90, 10){\vector(2,1){60}}
  \put(150, 10){\line(0,1){30}}
  \put(150, 10){\vector(2,1){60}}
  \put(120, 10){\vector(2,1){60}}
  \put(190, 22){$s_\alpha$}

  \put(215, 38){$\theta_{\alpha+1}$}
  \put(160, 7){$\theta_{\alpha}$}

  \put(70, 10){\vector(0,1){30}}
  \put(45, 10){\vector(0,1){30}}
  \put(50, 22){${\rm id}_{\gamma_\alpha}$}
  \put(88, 0){$\gamma_\alpha$}
\end{picture}
\end{center}
\caption{A shift\label{shift}}
\end{figure}

\smallskip

\item{}{{\bf (Composition)}}
If $\alpha < \beta < \gamma$ then
  $\F_{\alpha \gamma} = \lbrace g \circ f : f \in \F_{\alpha \beta},
  g \in \F_{\beta \gamma} \rbrace .$

\smallskip

\item{}{{\bf (Factoring)}} Suppose $\gamma$ is limit, $\alpha <\gamma$ and $f,g \in \F_{\alpha \gamma}$.
Then there exists $\beta$ such that $\alpha <\beta <\gamma$, and $f',g'\in \F_{\alpha,\beta}$
and $h \in \F_{\beta \gamma}$ such that $f = h \circ f'$ and $g = h \circ g'$. (Figure~\ref{factoring}).

\begin{figure}[h]
\begin{center}
\begin{picture}(200, 80)
  \put(10, 10){$\alpha$}
  \put(10, 40){$\beta$}
  \put(20, 5){\line(0,1){65}}
  \put(20, 10){\line(1,0){70}}
  \put(20, 40){\line(1,0){100}}
  \put(20, 70){\line(1,0){140}}
  \put(100, 10){$\theta_\alpha$}
  \put(130, 40){$\theta_\beta$}
  \put(170, 70){$\theta_\gamma$}

  \put(50, 10){\vector(0,1){30}}
  \put(80, 10){\vector(0,1){30}}
  \put(40, 25){$f'$}
  \put(70, 25){$g'$}

  \put(70, 40){\vector(0,1){30}}
  \put(100, 40){\vector(0,1){30}}
  \put(60, 55){$h$}
  \put(90, 55){$h$}
\end{picture}
\end{center}
\caption{Factoring\label{factoring}}
\end{figure}

\item{}{{\bf (Fullness)}}   If $\alpha < \beta$ then $\theta_\beta = \bigcup
\lbrace f[\theta_\alpha] : f \in \F_{\alpha \beta} \rbrace$. Moreover, $\theta_{\omega_1}=\omega_2$.
\end{enumerate}
\end{definition}

We
then have (see \cite{MR85i:03162}) that if $\alpha < \beta\mbox{ and } \xi < \theta_\beta$,
then there is a \emph{unique} predecessor of $\xi$ on level
$\alpha$, i.e. there is a unique $\eta <\theta_\alpha$ such that
$f(\eta)=\xi$, for some $f \in \F_{\alpha \beta}$.
Moreover, any such $f$ is uniquely determined on $\eta + 1$.
We call $\eta$ the $\alpha$-th {\em predecessor} of $\xi$ and write
  $$ \pi_\alpha^\beta(\xi) = \eta. $$

\begin{definition} Given a simplified $(\omega_1,1)$-morass $\morass$
we define  the ordering $\preceq^{\morass}$ on $\omega_2$ as follows:
  $$ \xi \preceq^{\morass}  \eta \quad \mbox{iff}  \quad
   \pi_\alpha^{\omega_1}(\xi) \leq \pi_\alpha^{\omega_1}(\eta), \mbox{ for all } \alpha <\omega_1.$$
We also define the ordering $\preceq_\alpha^{\morass}$ by:
  $$ \xi \preceq_\alpha^{\morass} \eta \quad \mbox{iff} \quad
  \xi \preceq^{\morass} \eta \quad \& \quad
 \pi_\alpha^{\omega_1}(\xi) = \pi_\alpha^{\omega_1}(\eta). $$
If $\morass$ is clear from the context we write $\preceq$ for $\preceq^{\morass}$
and $\preceq_\alpha$ for $\preceq^{\morass}_\alpha$.

\begin{figure}[h]
\begin{center}
\begin{picture}(200, 85)
  \put(20, 5){\line(0,1){65}}
  \put(20, 10){\line(1,0){70}}
  \put(20, 40){\line(1,0){100}}
  \put(20, 70){\line(1,0){140}}
  \put(100, 10){$\theta_\alpha$}
  \put(130, 40){$\theta_\beta$}
  \put(5, 70){$\omega_1$}
  \put(170, 70){$\omega_2$}

  \put(75, 70){\vector(-1,-4){15}}
  \put(95, 70){\vector(-1,-3){20}}
  \put(75, 75){$\xi$}
  \put(95, 75){$\eta$}
  \put(35, 47){$\pi_\beta^{\omega_1}(\xi)$}
  \put(95, 47){$\pi_\beta^{\omega_1}(\eta)$}
\end{picture}
\end{center}
\caption{The ordering $\preceq$.}\label{ordering}
\end{figure}

\end{definition}

Given a simplified $(\omega_1,1)$-morass $\morass$, we define a certain subfamily
 $\F(\morass)$ of $\Fn(\omega_2, \omega_1, \omega_1)$ and show that it is
 strategically persistent.

\begin{definition}\label{F(morass)} Suppose $\morass$ is a simplified $(\omega_1,1)$-morass.
Let $\F(\morass)$ be the set of all  $f \in \Fn(\omega_2, \omega_1, \omega_1)$ such that:
\begin{enumerate}
\item if $\xi,\eta \in \dom f$, $f(\eta)=\alpha$ and $\xi \preceq_\alpha \eta$,
then $f(\xi)=\alpha$.
\item $f^{-1}\{ \alpha \}$ is $\preceq$-bounded, for all $\alpha \in {\ran f}$.
\end{enumerate}
\end{definition}

Note that the family $\F(\morass)$ is closed under subfunctions.
If $\morass$ is clear from the context, we will write $\F$ for $\F (\morass)$.
We first show the following.

\begin{lemma}\label{strategic} Suppose $\morass$ is a simplified $(\omega_1,1)$-morass.
Then $\F(\morass)$ is strategically persistent.
\end{lemma}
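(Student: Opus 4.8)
The plan is to exhibit an explicit winning strategy for $\ple$ and to check that it keeps her inside $\F=\F(\morass)$ for all $\omega_1$ rounds. Two preliminary reductions simplify matters. First, since $\ple$ adds only one new point to her domain per round, at every stage $\xi<\omega_1$ the function $p_\xi$ she has built is countable, so $p_\xi\in\F$ amounts to the two clauses of Definition~\ref{F(morass)}; moreover clause~(1) is a restriction on the values of points \emph{already} in $\dom{p_\xi}$ and so never obliges $\ple$ to enlarge her domain. Second, clause~(1) is preserved under unions of countable increasing chains—any offending pair $\xi\preceq_\alpha\eta$ with $p(\eta)=\alpha$ occurs already at a stage where both points are present—so at limit rounds the only thing that can go wrong is clause~(2), the $\preceq$-boundedness of the fibers $p^{-1}\{\alpha\}$.

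The strategy is greedy. Confronted with $\alpha_\xi$, if $\alpha_\xi\in\dom{p}$ she repeats $p$; otherwise she must give it a value. If there is an $\eta\in\dom{p}$ with $\alpha_\xi\preceq_{p(\eta)}\eta$, clause~(1) leaves her no choice and she sets $p(\alpha_\xi)=p(\eta)$ (the \emph{forced} case); if there is no such $\eta$ she assigns a \emph{fresh} value, an ordinal $v<\omega_1$ chosen larger than every value used so far and above every level at which $\alpha_\xi$ first differs, under $\pi_\bullet^{\omega_1}$, from a point already in the domain. Using the composition law $\pi_\beta^{\omega_1}=\pi_\beta^{\gamma}\circ\pi_\gamma^{\omega_1}$ one checks that the freshness of $v$ guarantees that no earlier point is $\preceq_v$-below $\alpha_\xi$, so clause~(1) is vacuous for the fresh move; and in the forced case the transitivity of $\preceq_a$ shows that any earlier $\xi'\preceq_a\alpha_\xi$ already satisfies $\xi'\preceq_a\eta$, whence $p(\xi')=a$ automatically. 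For clause~(2) the key observation is that a forced assignment always places the new point $\preceq$-\emph{below} its witness $\eta$; consequently each fiber $p^{-1}\{\alpha\}$ only ever grows downward from the first point that received the (then fresh) value $\alpha$, and, by transitivity of $\preceq$, is bounded by that \emph{origin}. Since this bound is fixed once and for all, it survives the passage to limits, so $\bigcup_{\eta<\xi}p_\eta$ again has $\preceq$-bounded fibers; together with the first paragraph this gives $p_\xi\in\F$ at every $\xi$.

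The step I expect to be the real obstacle is the well-definedness of the forced value, i.e. the proof that $\ple$ never reaches a position in which a newly played $\alpha_\xi$ is forced, by two earlier points $\eta,\eta'$ with $p(\eta)=a<a'=p(\eta')$, to take two different values. The composition law turns such a clash into the purely morass-theoretic statement that $\eta$ and $\eta'$ share a level-$a$ predecessor while the common $\preceq$-lower bound $\alpha_\xi$ agrees with $\eta'$ throughout the levels below $a'$. What one must prove is that $\ple$'s value choices can be organized so as to maintain the invariant that any two domain points lying in a common column at the smaller of their two values in fact have equal values; granting this invariant, the clash above yields $a=a'$. Verifying that the invariant is preserved by both forced and fresh moves is exactly where the fine structure of $\morass$ enters: one uses the explicit successor (shift) description together with the factoring axiom at limit levels to show that two distinct columns witnessing the putative values $a$ and $a'$ cannot simultaneously capture the predecessor $\alpha_\xi$ without forcing a $\preceq$-relation between $\eta$ and $\eta'$ that the freshness convention has already excluded. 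Once this consistency is secured the remaining obligations—that $\ple$ covers every $\alpha_\xi$ and that clauses~(1) and~(2) genuinely hold—are routine, and the strategy wins.
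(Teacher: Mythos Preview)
Your strategy is exactly the paper's: assign the forced value when some earlier witness $\eta$ has $\alpha_\xi\preceq_{p(\eta)}\eta$, and otherwise assign a fresh value strictly above all earlier values and above the ``splitting level'' between $\alpha_\xi$ and each earlier point. You have also correctly isolated the one genuine obstacle---that the forced value is well-defined---and your verification of clauses~(1) and~(2) once that is granted is fine.

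Where you are vaguer than necessary is in resolving that obstacle. You gesture at an inductive invariant and at ``the explicit successor (shift) description together with the factoring axiom at limit levels''. The paper's argument is more direct and uses much less of the morass structure. It introduces the function $\mu(\xi,\eta)$, the least $\alpha$ for which some $f\in\F_{\alpha,\omega_1}$ has both $\xi$ and $\eta$ in its range, and records the single consequence that for $\beta\ge\mu(\xi,\eta)$ the projection $\pi_\beta^{\omega_1}$ preserves the ordinary order of $\xi$ and $\eta$. The freshness requirement is then simply $\alpha_j>\alpha_i$ and $\alpha_j>\mu(\xi_i,\xi_j)$ for all $i<j$. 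With this in hand, suppose $\alpha_\xi$ is simultaneously forced by $\xi_k$ (value $\alpha$) and $\xi_l$ (value $\beta$), with $k<l$ chosen minimal for their respective values. Minimality of $l$ forces $\beta$ to be a fresh choice, hence $\beta>\alpha$ and $\beta>\mu(\xi_k,\xi_l)$. A two-case analysis on whether $\xi_k<\xi_l$ or $\xi_l<\xi_k$ then gives a contradiction by chasing projections at level $\beta$ (and, in the second case, showing $\xi_l\preceq_\alpha\xi_k$, so $\xi_l$ should itself have been forced at stage $l$). No invariant beyond this, and no appeal to the successor or factoring clauses per se, is needed---only the order-preservation property of $\mu$. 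Your sketch would likely work, but it is over-engineered; the paper's case split is shorter and worth internalizing.
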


\begin{proof} Given $\xi,\eta <\omega_2$, by (4) and (3) of Definition~\ref{def1}
there exists $\alpha <\omega_1$ and $f\in \F_{\alpha,\omega_1}$ such that
$\xi,\eta \in {\ran f}$. Let $\mu(\xi,\eta)$ be the least such $\alpha$.
If $\xi <\eta$ it follows that $\pi_\beta^{\omega_1}(\xi)<\pi_\beta^{\omega_1}(\eta)$,
for every $\beta$ such that $\mu(\xi,\eta) \leq \beta <\omega_1$.
We now describe a strategy for $\ple$ in the persistency game on $\F(\morass)$.
At every stage $j$ if player $\pla$ plays some $\xi_j <\omega_2$ then player $\ple$
 picks an ordinal $\alpha_j <\omega_1$ and plays $p_j=\{ \langle \xi_i,\alpha_i\rangle : i\leq j\}$.
Thus, we only need to describe how to choose the ordinals $\alpha_j$
and check that the corresponding function $p_j$ belongs to $\F(\morass)$.
Suppose we are at stage $j$ and player $\pla$ plays $\xi_j$.
Player $\ple$ first asks if there is an ordinal $i<j$ such that
$\xi_j \preceq_{\alpha_i} \xi_i$. If so, then $\ple$ picks the least such $i$
and sets $\alpha_j=\alpha_i$. Otherwise, $\ple$ picks any ordinal $\alpha_j$
strictly bigger than the $\alpha_i$, for $i<j$, and $\mu(\alpha_i,\alpha_j)$, for $i<j$.
In order to check that the corresponding functions $p_j$ are in $\F(\morass)$
we need the following.

\begin{clai} At every stage $j$ there is at most one $\alpha$ for which
there is $i<j$ such that $\xi_j\preceq_{\alpha}\xi_i$ and  $\alpha_i=\alpha$.
\end{clai}

\begin{proof} Suppose there were two distinct such ordinals, say $\alpha$ and $\beta$.
Let $k$ be the least such that $\alpha_k=\alpha$ and $\xi_j\preceq_\alpha \xi_k$
 and, similarly, let $l$ be the least such that
$\alpha_{l}=\beta$ and $\xi_j \preceq_{\beta}\xi_{l}$.
Suppose that $k< l$. Notice that, by the minimality of $l$,
there is no $i <l$ such that $\alpha_{i}=\beta$
and $\xi_{l}\preceq_{\beta} \xi_{i}$.
Therefore, by the definition of $\alpha_l$, it follows that
$\beta $ is bigger than $\alpha$ and $\mu(\xi_{k},\xi_{l})$.
We consider two cases.

\medskip

\noindent {\em Case 1.} $\xi_k <\xi_{l}$. Since $\beta > \mu(\xi_k,\xi_{l})$
we have that $\pi_{\beta}^{\omega_1}(\xi_k) <\pi_{\beta}^{\omega_1}(\xi_{l})$.
Since $\xi_j \preceq_{\alpha}\xi_k$ and $\alpha <\beta$ we have that
$\pi_{\beta}^{\omega_1}(\xi_j) \leq \pi_{\beta}^{\omega_1}(\xi_{k})$.
Therefore, we have that $\pi_{\beta}^{\omega_1}(\xi_j) < \pi_{\beta}^{\omega_1}(\xi_{l})$.
On the other hand, we have that $\xi_j \preceq_{\beta}\xi_{l}$, which means
that, in particular,  $\pi_{\beta}^{\omega_1}(\xi_j) =\pi_{\beta}^{\omega_1}(\xi_{l})$,
a contradiction.

\medskip

\noindent {\em Case 2.} $\xi_{l} <\xi_k$. Since $\beta >\mu (\xi_k, \xi_{l})$
we have that $\pi_{\gamma}^{\omega_1}(\xi_{l}) <\pi_{\gamma}^{\omega_1}(\xi_k)$, for
all $\gamma \geq \beta$. We also have that
$\pi_{\beta}^{\omega_1}(\xi_j)=\pi_{\beta}^{\omega_1}(\xi_{l})$.
Since  $\xi_j\preceq_{\alpha}\xi_k$ and $\alpha  <\beta$ it follows
that $\xi_{l}\preceq_{\alpha} \xi_k$. Therefore, at stage $l$ we should
have let $\alpha_{l}=\alpha$, a contradiction.
\end{proof}

\noindent Now, we check that the functions $p_j$ belong to $\F(\morass)$, for all $j$.
Condition (1) of Definition~\ref{F(morass)} is satisfied by the construction.
To verify (2), suppose $\alpha \in \ran {p_j}$ and notice that
if $i$ is the least such that $\alpha_i=\alpha$ then $\xi_i$ is the
$\preceq_{\alpha}$-largest element of $p_j^{-1} \{ \alpha\}$.
Therefore, $p_j^{-1} \{ \alpha\}$ is $\preceq$-bounded.
This completes the proof of Lemma~~\ref{strategic}.
\end{proof}

In order to show that $\F(\morass)$ does not have a $\sigma$-closed persistent family
we will need to assume certain properties of $\morass$.

\begin{definition}\label{properties} Let $\morass$ be a simplified $(\omega_1,1)$-morass.
\begin{enumerate}
\item  We say that $\morass$ is {\em stationary} if
${\mathcal S}(\morass) =\{ f[\theta_\alpha] : f\in \F_{\alpha,\omega_1}\}$
is a stationary subset of $[\omega_2]^{\omega}$.
\item  We say that $\morass$ satisfies the  $\aleph_2$-{\em antichain condition} if
for every $X\subseteq (\omega_2)^\omega$ of size $\omega_2$
there are distinct $s,t\in X$ such that $s(n)\preceq t(n)$, for all $n$,
i.e. there is no antichain of size $\aleph_2$ in $(\omega_2,\preceq)^\omega$
under the product ordering.
\end{enumerate}
\end{definition}

We first show that if $\morass$ has the above properties then $\F(\morass)$
does not have a $\sigma$-closed persistency subfamily. Then we show
that if $\morass$ is obtained by the standard forcing for adding
a simplified $(\omega_1,1)$-morass then $\morass$ has the above properties.

\begin{lemma}\label{extension} Suppose {\rm CH} holds and $\morass$ is
a simplified $(\omega_1,1)$-morass which satisfies the $\aleph_2$-antichain condition.
Let $\A$ be a subset of $\F(\morass)^\omega$ of size $\aleph_2$. Then
there is $\vec{g}\in \A$ and  $\B\subseteq \A$ of size $\aleph_2$ such
that for every $\vec{h}\in \B$, every $n$, and every $f\in \F(\morass)$,
if $f$ extends $h_n$ and ${\dom {g_n}} \subseteq {\dom f}$
then $f$ extends  $g_n$.
\end{lemma}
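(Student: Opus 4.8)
The plan is to reduce the conclusion to a purely order-theoretic domination statement and then locate $\vec g$ and $\B$ by applying the $\aleph_2$-antichain condition on top of several $\mathrm{CH}$-refinements. First I would analyze when the conclusion can fail. Fix $\vec h,\vec g$ and suppose $f\in\F(\morass)$ extends $h_n$, covers $\dom{g_n}$, but $f(\xi)\neq g_n(\xi)$ for some $\xi\in\dom{g_n}$; then necessarily $\xi\notin\dom{h_n}$, and by clause (1) of Definition~\ref{F(morass)} the value $g_n(\xi)=:\alpha$ is \emph{not} forced, i.e. there is no $\eta\in\dom{h_n}$ with $h_n(\eta)=\alpha$ and $\xi\preceq_\alpha\eta$. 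Conversely, whenever such an $\eta$ exists, clause (1) applied to $f\supseteq h_n$ forces $f(\xi)=\alpha=g_n(\xi)$. Thus $\vec h$ is \emph{good for} $\vec g$ exactly when every $\xi\in\dom{g_n}$ is $\preceq_{g_n(\xi)}$-dominated inside $\dom{h_n}$ by a point carrying the same value. Using clause (2) (the $\preceq$-boundedness of each fiber $g_n^{-1}\{\alpha\}$) together with the morass facts recalled after Definition~\ref{def1} (uniqueness of predecessors and factoring), I would show that this domination is witnessed by a canonical countable set $T(\vec g)\subseteq\dom{g_n}$ of $\preceq_\alpha$-maximal points of the fibers of the $g_n$.

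Second, I would carry out the refinements. By the $\Delta$-system lemma, available since $\mathrm{CH}$ gives $\lambda^{\aleph_0}\le\aleph_1<\aleph_2$ for all $\lambda<\aleph_2$, pass to $\A_1\subseteq\A$ of size $\aleph_2$ whose domains $D(\vec g)=\bigcup_n\dom{g_n}$ form a $\Delta$-system with root $R$. As $R$ is countable, under $\mathrm{CH}$ there are only $\aleph_1$ possible restrictions $(g_n\restriction R)_n$ and only $\aleph_1$ possible \emph{types} recording the values and the $\pi_\beta^{\omega_1}$-predecessor data of the countably many points of $T(\vec g)$; a pigeonhole yields $\A_2\subseteq\A_1$ of size $\aleph_2$ on which all these data agree. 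On $R$ the functions $g_n$ and $h_n$ coincide, so every $\xi\in\dom{g_n}\cap R$ is automatically forced; the points that still need domination are the private ones, and for these the frozen type guarantees that the witness in $\dom{h_n}$ is the corresponding point of $T(\vec h)$, \emph{provided} $T(\vec g)$ is coordinatewise $\preceq$-below $T(\vec h)$.

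Third, encode each $\vec g\in\A_2$ by the $\omega$-sequence $t(\vec g)\in(\omega_2)^\omega$ enumerating $T(\vec g)$ in the order fixed by the common type. By the previous step, $t(\vec g)\preceq t(\vec h)$ coordinatewise in $(\omega_2,\preceq)^\omega$ implies that $\vec h$ is good for $\vec g$, so it suffices to find $\vec g$ whose up-set $\{\vec h:t(\vec g)\preceq t(\vec h)\}$ has size $\aleph_2$ and take this set for $\B$. Here the $\aleph_2$-antichain condition enters: a maximal $\preceq$-antichain among the $t(\vec g)$ has size $\le\aleph_1$, so every element is $\preceq$-comparable to one of $\le\aleph_1$ fixed elements, whence some fixed element has an up-set or a down-set of size $\aleph_2$. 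The large-up-set case finishes the proof; the down-set case is handled by iterating the same argument inside the down-set and invoking the well-foundedness of $\preceq^{\morass}$ coming from the morass, so that the resulting strictly $\preceq$-descending sequence cannot reach length $\omega$ and the large-up-set alternative is forced.

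The main obstacle is precisely this last step. The antichain condition by itself produces only a single comparable pair, whereas the lemma demands a set $\B$ of full size $\aleph_2$; upgrading "one pair" to "one $\vec g$ below $\aleph_2$-many" is where the fine structure of $\morass$ must be used, in particular that $\preceq^{\morass}$ is well-founded along the descending recursion so that the down-set alternative cannot persist. A secondary delicate point is the morass bookkeeping in the first step: one must verify that each fiber $g_n^{-1}\{\alpha\}$ admits a canonical countable dominating set whose type can be frozen by a single $\mathrm{CH}$-pigeonhole, so that coordinatewise $\preceq$-domination of the codes genuinely translates back into clause-(1) forcing of every private point.
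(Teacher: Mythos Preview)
Your overall framework is close to the paper's, but the third step contains a genuine gap: the way you dispose of the ``large down-set'' alternative is incorrect. Well-foundedness of $\preceq^{\morass}$ on $\omega_2$ does \emph{not} transfer to the coordinatewise product order on $(\omega_2)^\omega$. For instance, already in $(\omega,\le)^\omega$ the sequences $s_k$ with $s_k(n)=1$ for $n\ge k$ and $s_k(n)=0$ for $n<k$ form an infinite strictly descending chain. So your iteration ``find $s_0$ with large down-set, repeat inside it to get $s_1\prec s_0$, etc.'' need not terminate, and the large-up-set alternative is not forced by well-foundedness alone.

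The fix is much simpler than an iteration and is exactly what the paper does: since $\preceq^{\morass}$ refines the usual ordinal order, for any fixed $s\in(\omega_2)^\omega$ the down-set $\{t:t(n)\preceq s(n)\text{ for all }n\}$ is contained in $\prod_n(s(n)+1)$, which by $\mathrm{CH}$ has size at most $\aleph_1$. Hence after taking a maximal antichain $Z$ (of size $\le\aleph_1$ by the $\aleph_2$-antichain condition), only $\aleph_1$-many elements can lie below some member of $Z$, and therefore some $s\in Z$ has an up-set of size $\aleph_2$. This is the paper's preliminary observation, and it replaces your entire third-step dichotomy. Incidentally, the paper's refinement in the second step is also cleaner than your $\Delta$-system/type-freezing: it pulls each $\dom{g_n}$ down to a fixed template $E_n\subseteq\theta_\mu$ via a morass map $f_{\vec g}\in\F_{\mu,\omega_1}$ and freezes the single function $e_n=g_n\circ f_{\vec g}$; then $f_{\vec g}(\xi)\preceq f_{\vec h}(\xi)$ immediately gives $\rho\preceq_\mu\rho'$ with matching values, so clause~(1) applies directly without needing to identify $\preceq_\alpha$-maximal fiber points.
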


\begin{proof} First, observe that if $X$ is a subset of $(\omega_2)^\omega$
of size $\aleph_2$ then there is $s\in X$ and $Y\subseteq X$ of size
$\aleph_2$ such that $s(n)\preceq t(n)$, for all $t\in Y$ and all $n$.
To see this, let $Z$ be a maximal antichain in $X$. Then every element of $X$ is
comparable with an element of $Z$. Since $\preceq$ refines the usual ordering on $\omega_2$,
by ${\rm CH}$, for every $s\in Z$ the set of $t\in X$
such that $t(n)\preceq s(n)$, for all $n$, has size at most $\aleph_1$.
Therefore, there is $s\in Z$ such that the set
$$
Y=\{ t\in X: s(n)\preceq t(n), \mbox{ for all } n\}
$$
is of size $\aleph_2$. Then $s$ and $Y$ are as required.

We now turn to the proof of the lemma.
First of all, we may assume that there is a fixed ordinal $\alpha <\omega_1$
such that $\alpha = \sup (\bigcup_n {\ran {g_n}})$, for all $\vec{g}\in \A$.
By {\rm CH}, we may assume that there is a fixed ordinal $\mu >\alpha$ and, for each
$n$ a subset $E_n$ of $\theta_\mu$ such that, for every $\vec{g}\in A$,
there is $f_{\vec{g}}\in \F_{\mu,\omega_1}$  such that $ f_{\vec{g}}[E_n]= \dom {g_n}$.
Consider now the functions $e_{n,\vec{g}}= g_n \circ f_{\vec{g}}$, for
$\vec{g}\in \A$ and $n <\omega$. By {\rm CH} again, we may assume that there
are fixed functions $e_n$, such that $e_{n,\vec{g}} = e_n$, for all
$\vec{g} \in \A$ and $n$.
By the first paragraph of this proof, there is $\vec{g} \in \A$ and
a subset $\B$ of $\A$ of size $\aleph_2$ such that $f_{\vec{g}}(\xi)\preceq f_{\vec{h}}(\xi)$,
for all $\vec{h}\in\B$ and $\xi <\theta_\mu$.
We claim that $\vec{g}$ and $\B$ are as required.
To see this, consider some $\vec{h} \in \B$ and some integer  $n$. Let $u$ be any extension  of
$h_n$ which belongs to $\F(\morass)$ and is defined on  $\dom {g_n}$.
We check that $u$  extends $g_n$.
Let $\rho \in \dom {g_n}$. Then there is $\xi \in E_n$ such that $f_{\vec{g}}(\xi)=\rho$.
Let $\rho'=f_{\vec{h}}(\xi)$. Then $\rho \preceq_\mu \rho'$.
Since $u$ extends $h_n$ and $h_n(\rho') \leq \mu$,  by (1) of Definition~\ref{F(morass)} it follows
that $u(\rho)=h_n(\rho')$. On the other hand, $g_n(\rho)=h_n(\rho')=e_n(\xi)$.
Therefore, $u(\rho)=g_n(\rho)$. Since $\rho$ was arbitrary it follows that
$u$ extends $g_n$.
\end{proof}

\def\z{\zeta}

\begin{lemma}\label{sigma-persistent} Assume {\rm CH} and let $\morass$
be a simplified $(\omega_1,1)$-morass which is stationary and satisfies
the $\aleph_2$-antichain condition. Then there is no $\sigma$-closed persistent
subfamily of $\F(\morass)$.
\end{lemma}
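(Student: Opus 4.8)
The plan is to argue by contradiction: assume $\mathcal{D}\subseteq\F(\morass)$ is a $\sigma$-closed persistent subfamily. I would first record two consequences of $\sigma$-closedness. First, by interleaving single-step persistence with $\sigma$-closed unions, for every $p\in\mathcal{D}$ and every countable $S\subseteq\omega_2$ there is $q\in\mathcal{D}$ with $p\subseteq q$ and $S\subseteq\dom q$; call this \emph{countable persistence}. Second, and crucially, whenever $g_0\subseteq g_1\subseteq\cdots$ is increasing in $\mathcal{D}$, its $\sigma$-closed bound lies in $\mathcal{D}\subseteq\F(\morass)$ and hence satisfies condition~(2) of Definition~\ref{F(morass)}: each of its fibers is $\preceq$-bounded. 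The whole difficulty is that this boundedness is \emph{automatic}, so the contradiction cannot be produced naively — one must force a single countable increasing chain in $\mathcal{D}$ whose bound has a $\preceq$-unbounded fiber.

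Next I would manufacture a family to which Lemma~\ref{extension} applies. Fix a large regular $\theta$ and build a continuous $\in$-increasing chain $\langle N_\xi:\xi<\omega_2\rangle$ of countable elementary submodels of $(H(\theta),\in,\morass,\mathcal{D},\preceq)$ whose union contains $\omega_2$, and put $M_\xi=N_\xi\cap\omega_2$. Since $\morass$ is stationary, the set $W=\{\xi:M_\xi\in\mm(\morass)\}$ is stationary in $\omega_2$, so for $\aleph_2$ many $\xi$ we have $M_\xi=e_\xi[\theta_{\alpha_\xi}]$ for some $\alpha_\xi<\omega_1$ and $e_\xi\in\F_{\alpha_\xi,\omega_1}$. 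For each such $\xi$, using countable persistence inside $N_{\xi+1}$, I would choose an increasing chain $g^\xi_0\subseteq g^\xi_1\subseteq\cdots$ in $\mathcal{D}$ whose domains exhaust $M_\xi$, and set $\vec g^{\,\xi}=(g^\xi_n)_n\in\mathcal{D}^\omega$. Applying Lemma~\ref{extension} to $\A=\{\vec g^{\,\xi}:\xi\in W\}$ — this is exactly where the $\aleph_2$-antichain condition enters — yields a minimal tuple $\vec g=\vec g^{\,\xi^*}$ and a set $\B\subseteq\A$ of size $\aleph_2$ with the stated extension property. Writing $u=\bigcup_n g^{\xi^*}_n$, minimality together with countable persistence shows that for every $\vec h=\vec g^{\,\zeta}\in\B$ there is $G\in\mathcal{D}$ extending both $u$ and $v_\zeta=\bigcup_n h_n$; in particular $u$ is compatible, inside $\F(\morass)$, with cofinally many such $v_\zeta$.

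The contradiction should now be extracted from the morass ordering at a good limit level. Fix $\rho^*\in\dom u$, let $a=u(\rho^*)$ and $\eta=\pi_a^{\omega_1}(\rho^*)$, and arrange (by a further use of stationarity, replacing $\A$ by a reflecting subfamily) that $\alpha_{\xi^*}$ is a limit and that $M_{\xi^*}=\dom u$ reflects the fiber $u^{-1}\{a\}$. The factoring property then provides, for $\zeta\in\B$ above $\xi^*$, points $\rho$ with $\pi_a^{\omega_1}(\rho)=\eta$ whose $\preceq_a$-predecessors lying in $\dom u$ form a set that is $\preceq$-cofinal in $M_{\xi^*}$ and entirely contained in $u^{-1}\{a\}$; by condition~(1) of Definition~\ref{F(morass)} any member of $\mathcal{D}$ extending $u$ and defined at such a $\rho$ is forced to send $\rho$ to $a$. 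Threading these forced points along the elementary chain and taking $\sigma$-closed bounds at the countable limit produces an increasing chain in $\mathcal{D}$ whose bound has a $\preceq$-unbounded $a$-fiber, contradicting the boundedness recorded in the first paragraph. The main obstacle is precisely this last step: one must exploit the reflection available at a stationary-good level, rather than mere cofinality, to guarantee that the value $a$ cannot \emph{escape} to larger values at the forced points, so that the unboundedness of the $a$-fiber is realized already by a single countable $\sigma$-chain and not merely in the limit of length $\omega_1$.
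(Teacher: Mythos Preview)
Your setup through the second paragraph is coherent: you correctly observe countable persistence, you correctly apply Lemma~\ref{extension} once to the family $\{\vec g^{\,\xi}:\xi\in W\}$, and you correctly deduce that for each $\vec h\in\B$ there is $G\in\mathcal{D}$ extending both $u=\bigcup_n g_n^{\xi^*}$ and $v_\zeta=\bigcup_n h_n$. But this is where the argument stalls. The functions $G_\zeta$ you obtain for different $\zeta\in\B$ are pairwise incomparable in general, so you do not have an increasing $\omega$-chain in $\mathcal{D}$---you only have $\aleph_2$ many extensions of the single function $u$. Each $G_\zeta$ individually lies in $\F(\morass)$ and hence has $\preceq$-bounded fibers; nothing you have produced has an unbounded fiber.

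Your third paragraph tries to manufacture the missing chain from the morass structure, but the key claim is not justified and appears to be false. You assert that for suitable $\rho$ the $\preceq_a$-predecessors of $\rho$ lying in $\dom u$ are both $\preceq$-cofinal in $M_{\xi^*}$ and contained in $u^{-1}\{a\}$. But $u\in\F(\morass)$, so $u^{-1}\{a\}$ is $\preceq$-bounded by condition~(2), while $M_{\xi^*}\in\mm(\morass)$ is $\preceq$-unbounded; these two requirements are incompatible. The vague ``reflecting'' and ``threading'' do not repair this.

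The paper's proof avoids this trap by \emph{iterating} Lemma~\ref{extension} rather than applying it once. One works inside a single countable $M\prec H_\tau$ with $M\cap\omega_2\in\mm(\morass)$ and a cofinal sequence $(\zeta_n)_n$ in $\zeta=\sup(M\cap\omega_2)$. For each $\delta$ one fixes just $g^0_\delta\in\G$ with $\delta\in\dom{g^0_\delta}$, thins to $X_0$ of size $\aleph_2$ on which $g^0_\delta(\delta)$ is a fixed value $\alpha$, and then alternates: Lemma~\ref{extension} produces $\delta_n\in X_n$ and $X_{n+1}\subseteq X_n\setminus\zeta_{n+1}$; persistence produces $g^{n+1}_\delta\in\G$ extending $g^n_\delta$ and defined on $\dom{g^n_{\delta_n}}$, hence (by the lemma) extending $g^n_{\delta_n}$ as well. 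The diagonal sequence $(g^n_{\delta_n})_n$ is then genuinely increasing in $\G$, with $g^n_{\delta_n}(\delta_n)=\alpha$ and $(\delta_n)_n$ $\preceq$-unbounded. This iterated use of Lemma~\ref{extension}---each step refining the index set and extending the functions---is the idea your proposal is missing.
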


\begin{proof} Fix a  persistent subfamily $\G$ of $\F(\morass)$. We need to show
that $\G$ is not $\sigma$-closed.
Let $\tau$ be a sufficiently large regular cardinal.
 Since ${\mathcal S}(\morass)$ is stationary in $[\omega_2]^\omega$,
we can find a countable elementary submodel $M$ of $H_\tau$ containing all the relevant
objects such that $M\cap \omega_2\in {\mathcal S}(\morass)$.
Let $\zeta = \sup (M\cap \omega_2)$ and fix an increasing sequence
$\{\z_n\}_n$ of ordinals in $M$ which is cofinal  in $\z$.

We now work in $M$. For each  $\delta<\omega_2$ fix $g_\delta^0 \in \G$ such that
$\delta \in\dom{g_\delta^0}$.  We can find $\alpha<\omega_1$ and
$X_0\subseteq \omega_2\setminus\zeta_0$ of size $\aleph_2$
such that $g_\delta^0(\delta)= \alpha$, for all $\delta \in X_0$.
Since $\morass$ satisfies the $\aleph_2$-antichain condition, by  Lemma~\ref{extension}
we can fix $\delta_0\in X_0$ and $X_1\subseteq X_0\setminus\zeta_1$
of size $\aleph_2$ such that, for all $\delta \in X_1$, any extension
of $g_\delta^0$ to a function in $\F(\morass)$ which is defined on
$\dom {g_{\delta_0}^0}$ must extend $g_{\delta_0}^0$.
For each $\delta \in X_1$ fix some $g_{\delta}^1\in \G$ which extends
$g_{\delta}^0$ and is defined on $\dom {g_{\delta_0}^0}$.
It follows that $g_{\delta_0}^0\cup g_{\delta}^0\subseteq g_{\delta}^1$.
 By Lemma~\ref{extension} again, we can fix $\delta_1\in X_1$ and $X_2\subseteq X_1\setminus\zeta_2$
 of size $\aleph_2$ such that, for all $\delta\in X_2$ and all $h\in \F(\morass)$,
 if $h$ extends $g_{\delta}^1$ and is defined on $\dom {g_{\delta_1}^1}$ then
 $h$ extends $g_{\delta_1}^1$.  We continue like this and get
 an increasing sequence $(\delta_n)_n$ of ordinals from $M$, a decreasing sequence
 $(X_n)_n$ of subsets of $\omega_2$ of size $\aleph_2$, and,  for each $n$ and
 $\delta \in X_n$, a function $g_{\delta}^n \in \G$ such that:

 \begin{enumerate}
 \item $\delta_n \geq \zeta_n$, for all $n$,
 \item $g_{\delta_n}^n \cup g_{\delta}^n \subseteq g_{\delta}^{n+1}$, for all $\delta \in X_{n+1}$.
 \end{enumerate}

 While the sequence $(\zeta_n)_n$ does not belong to $M$, at each stage
 we need to know only finitely many of the $\zeta_n$. Therefore, we
 can perform each step of the construction inside $M$.
 It follows that $(g_{\delta_n}^n)_n$ is an increasing sequence of functions
 from $\G$ and $g_{\delta_n}^n(\delta_n)=\alpha$, for all $n$.
 The sequence $(\delta_n)_n$ is cofinal in $\zeta$ and, since $M\cap \omega_2\in {\mathcal S}(\morass)$,
 it follows that it is unbounded in the sense of $\preceq$. Therefore, any
 functions which extends $\bigcup_n g_{\delta_n}^n$ violates (2) of Definition~\ref{F(morass)}
 and cannot be in $\F(\morass)$. It follows that $\G$ is not $\sigma$-closed.
\end{proof}

We now consider the standard forcing notion for adding a simplified
$(\omega_1,1)$-morass and show that the generic morass is stationary
and satisfies the $\aleph_2$-antichain condition.
Before we start, it will be convenient to make the following definition.

\begin{definition}\label{full}
For $\beta <\omega_2$ let $I_\beta$ be the interval $[\omega_1 \cdot \beta,\omega_1\cdot {\beta +1})$.
We say that a subset $A$ of $\omega_2$ is $\omega_1$-{\em full} if
$A\cap I_\beta$ is an initial segment of $I_\beta$, for all $\beta< \omega_2$.
\end{definition}

We now state a slight variation of the standard forcing for adding
a simplified $(\omega_1,1)$-morass from \cite{MR85i:03162}.

\begin{definition}[\cite{MR85i:03162}] \label{forc}
The forcing notion $\Pcal$  consists of tuples
$$
p= \langle \langle \t^p_\alpha:\alpha\le\d_p\rangle, \langle \F^p_{\a,\b}:
\a<\b\le\d_p\rangle, A_p,i_p \rangle,
$$
where $\delta_p<\omega_1$, $\langle \theta^p_\alpha:\alpha\leq \delta_p\rangle$ is a sequence
of limit ordinals $<\omega_1$, $\F^p_{\alpha,\beta}$ is a collection of order-preserving mappings
from $\ta^p$ to $\tb^p$, $A_p$ is an $\omega_1$-full subset of $\omega_2$,
$i_p$ is an order preserving bijection between $\theta^p_{\delta_p}$ and $A_p$,
and the following conditions hold:

\begin{enumerate}
\item  $\F^p_{\alpha, \alpha + 1} = \{ {\rm id}_{\ta}, s_\alpha \} $, where $s_\a$ is a shift
as in Definition~\ref{def1} (1). 


\item  If $\alpha < \beta < \gamma\leq \delta_p$ then
  $\F^p_{\alpha, \gamma} = \lbrace g \circ f : f \in \F^p_{\alpha, \beta},
  g \in \F^p_{\beta, \gamma} \rbrace$.

\item  Suppose $\alpha < \gamma\leq \delta_p$, $\gamma$ limit and $f,
g \in \F^p_{\alpha, \gamma}$. Then there is $\beta$ such that $\alpha < \beta < \gamma$
and there are $f', g' \in \F^p_{\alpha, \beta}$ and $h \in \F^p_{\beta, \gamma}$
such that $f = h \circ f'$ and $g = h \circ g'$.

\item  If $\alpha < \beta \leq \delta_p$ then $\theta^p_\beta = \bigcup
\lbrace f[\theta^p_\alpha] : f \in \F^p_{\alpha \beta} \rbrace$.
\end{enumerate}
\noindent  The ordering of $\Pcal$ is defined as follows. We say that $q\leq p$ if
 $\delta_p\leq \delta_q$, $\theta^p_\alpha=\theta^q_\alpha$ for $\alpha \leq \delta_p$,
$\F^p_{\alpha,\beta}=\F^q_{\alpha,\beta}$ if $\alpha < \beta \leq\delta_p$, and
$i_p=i_q\circ h$, for some $h\in \F^q_{\delta_p,\delta_q}$. Note that,
in particular, this means that $A_p \subseteq A_q$.
\end{definition}

\begin{lemma}\label{clo}
Let $(p_n)_n$ be a decreasing sequence of conditions in $\Pcal$. Then
there is  $q\in \Pcal$ such that $A_q= \bigcup_n A_{p_n}$ and
$q\leq p_n$, for all $n$. In particular, $\Pcal$ is $\sigma$-closed.
\end{lemma}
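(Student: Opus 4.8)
The plan is as follows. Since $p_{n+1}\le p_n$ forces $\delta_{p_n}\le\delta_{p_{n+1}}$, the sequence $(\delta_{p_n})_n$ is non-decreasing. If it is eventually constant, say equal to $\delta^*$ from some $N$ on, I would note that for $m\ge n\ge N$ the factoring clause of the ordering forces $i_{p_n}=i_{p_m}$, whence $p_n=p_m$; then $q=p_N$ is the required lower bound and $A_q=\bigcup_nA_{p_n}$ trivially. So I may assume $(\delta_{p_n})_n$ is not eventually constant, and, passing to a cofinal subsequence (a lower bound of a cofinal subsequence of a decreasing sequence bounds the whole sequence, and the union of the $A_{p_n}$ is unaffected), that it is strictly increasing. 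I set $\delta=\sup_n\delta_{p_n}$, a countable limit ordinal with every $\delta_{p_n}<\delta$; this will be $\delta_q$.

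Below $\delta$ everything is forced by coherence: for $\alpha<\delta$ I put $\theta^q_\alpha=\theta^{p_n}_\alpha$ and, for $\alpha<\beta<\delta$, $\F^q_{\alpha,\beta}=\F^{p_n}_{\alpha,\beta}$ for any $n$ with $\beta\le\delta_{p_n}$, these being independent of $n$ because $p_m\le p_n$ makes the data of $p_m$ agree with that of $p_n$ below $\delta_{p_n}$. For the set I take $A_q=\bigcup_nA_{p_n}$, as dictated by the statement. Since the $A_{p_n}$ are $\omega_1$-full and increasing (as $p_{n+1}\le p_n$ gives $A_{p_n}\subseteq A_{p_{n+1}}$), and an increasing union of initial segments of a block $I_\gamma$ is again an initial segment, $A_q$ is $\omega_1$-full; being countable it has order type a countable ordinal $\theta^q_\delta:=\mathrm{otp}(A_q)$, and I let $i_q\colon\theta^q_\delta\to A_q$ be the order isomorphism.

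The key device is to read the threading maps off the bijections themselves. Since $i_{p_m}$ is injective, the unique $h\in\F^{p_m}_{\delta_{p_n},\delta_{p_m}}$ witnessing $p_m\le p_n$ is $h_{n,m}=i_{p_m}^{-1}\circ i_{p_n}$, and these cohere. I then set $h_n=i_q^{-1}\circ i_{p_n}\colon\theta^{p_n}_{\delta_{p_n}}\to\theta^q_\delta$, an order embedding with range $i_q^{-1}[A_{p_n}]$; the ranges increase in $n$ with union $\theta^q_\delta$, and one checks $h_n=h_m\circ h_{n,m}$. Because each $\mathrm{ran}(h_n)$ has order type the limit ordinal $\theta^{p_n}_{\delta_{p_n}}$ and so no largest element, neither does their union, so $\theta^q_\delta$ is a limit ordinal, as a level must be. Finally, for $\alpha<\delta$ I define
$$\F^q_{\alpha,\delta}=\{\,h_n\circ g:\alpha\le\delta_{p_n},\ g\in\F^q_{\alpha,\delta_{p_n}}\,\},$$
with the convention $\F^q_{\gamma,\gamma}=\{\mathrm{id}\}$ so that each thread map $h_n$ lies in $\F^q_{\delta_{p_n},\delta}$; using $h_n=h_m\circ h_{n,m}$ and Composition inside $p_m$ this is independent of the choices and increasing in $n$.

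What remains, and where the real work lies, is checking that $q$ so defined is a condition and that $q\le p_n$ for every $n$. Successor is inherited below $\delta$ and, $\delta$ being a limit, needs no new data at the top. Composition for $\alpha<\beta<\delta$ reduces to Composition inside the $p_n$ after factoring through $h_n$. Fullness at $\delta$ follows from $\bigcup\{f[\theta^q_\alpha]:f\in\F^q_{\alpha,\delta}\}=\bigcup_n\mathrm{ran}(h_n)=\theta^q_\delta$ via Fullness inside each $p_n$. For Factoring I would, given $f,g\in\F^q_{\alpha,\delta}$, write both through a common thread map $h_k$ (possible since the defining family is increasing in $n$) as $f=h_k\circ f'$, $g=h_k\circ g'$, and take $\beta=\delta_{p_k}$ (or $\delta_{p_{k+1}}$ with $h=h_{k+1}$ in the degenerate case $\alpha=\delta_{p_k}$). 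Then $q\in\Pcal$, and $q\le p_n$ holds since $\delta_{p_n}\le\delta_q$, the lower data agree, and $i_{p_n}=i_q\circ h_n$ with $h_n\in\F^q_{\delta_{p_n},\delta_q}$; this is also consistent with $A_q=\bigcup_nA_{p_n}$. I expect the main obstacle to be exactly the verification of the morass axioms at the limit level $\delta$, especially Factoring and Fullness, together with showing $\theta^q_\delta$ is a limit ordinal; the observation that makes these routine is that the $i_{p_n}$ supply a canonical thread $i_{p_m}^{-1}\circ i_{p_n}$ organizing the direct limit, forcing the top ordinal to be $\mathrm{otp}(\bigcup_nA_{p_n})$ and $i_q$ to be its collapse.
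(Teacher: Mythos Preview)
Your proof is correct and follows essentially the same construction as the paper: set $\delta_q=\sup_n\delta_{p_n}$, $A_q=\bigcup_nA_{p_n}$, let $\theta^q_{\delta_q}$ be the order type of $A_q$ with $i_q$ its collapse, and define the top families via the thread maps $h_n=i_q^{-1}\circ i_{p_n}$, which is exactly the paper's formula $\F^q_{\alpha,\delta_q}=\{i_q^{-1}\circ i_{p_n}\circ f:f\in\F^{p_n}_{\alpha,\delta_{p_n}}\}$. The paper dismisses the verifications as ``straightforward'', whereas you supply them---in particular your treatment of the eventually-constant case, the check that $\theta^q_{\delta_q}$ is a limit ordinal, and the explicit Factoring argument at the new limit level are genuine additions of rigor rather than a different method.
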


\begin{proof}
Suppose $(p_n)_n$ is a decreasing sequence of conditions in $\Pcal$.
We define the required condition $q$.  We let
$A_q=\bigcup_n A_{p_n}$ and $\delta_q=\sup_n \delta_{p_n}$. Note that,
since the sequence of the $A_{p_n}$ is increasing and each of them
is $\omega_1$-full, then so is $A_q$.  Let $\theta^q_{\delta_q}$
be the order type of $A_q$ and $i_q$ the order preserving bijection
between $\theta^q_{\delta_q}$ and $A_q$.
For $\alpha <\delta_q$ we let $\theta^q_\alpha$ be equal to $\theta^{p_n}_\alpha$,
for any sufficiently large $n$. Also, for $\alpha <\beta <\delta^q$
we let $\F_{\alpha,\beta}^q$ be equal to $\F^{p_n}_{\alpha,\beta}$, for any sufficiently
large $n$. It remains to define the collections $\F_{\alpha,\delta_q}^q$, for
$\alpha <\delta_q$. Fix some $\alpha <\delta_q$ and let $n$ be sufficiently large
such that $\alpha < \delta_{p_n}$. We let
$$
\F_{\alpha,\delta_q}^q= \{   i^{-1}_{q} \circ i_{p_n}\circ f : f\in \F^{p_n}_{\alpha,\delta_{p_n}}\}.
$$
It is straightforward to check that $q =\langle \langle \theta^q_\alpha : \alpha \leq \delta_q\rangle,
\langle \F_{\alpha,\beta}^q : \alpha <\beta \leq \delta_q \rangle, A_q,i_q\rangle$
is a condition and $q \leq p_n$, for all $n$.
\end{proof}

It follows that $\Pcal$ preserves $\omega_1$. We now need a lemma on the compatibility
of conditions in $\Pcal$. First, let us say that two condtions $p$ and $q$ are
{\em isomorphic} if $\delta_p=\delta_q$, $\theta^p_\alpha = \theta^q_\alpha$, for all $\alpha \leq \delta_p$,
and  $\F_{\alpha,\beta}^p=\F_{\alpha,\beta}^q$, for all $\alpha <\beta \leq \theta^p_{\delta_p}$.
If $p$ and $q$ are isomorphic, we say that they are {\em directly compatible}
if there is $r\leq p,q$ such that $\delta_r = \delta_p +1$.
We call such $r$ the {\em amalgamation} of $p$ and $q$.

\begin{lemma}\label{compatibility}
Suppose $p$ and $q$ are two isomorphic conditions in $\Pcal$ such that
$A_p \cap A_q$ is an initial segment of both $A_p$ and $A_q$, and
$\sup (A_p\setminus A_q) <\inf (A_q \setminus A_p)$.
Then $p$ and $q$ are directly compatible.
\end{lemma}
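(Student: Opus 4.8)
The plan is to build the amalgamation $r$ by hand, with $\delta_r=\delta_p+1$, by placing a copy of $p$ as the bottom of the new top level and stacking the ``extra'' part of $A_q$ above it; the two maps of the new successor step will then do the witnessing, with the identity giving $r\leq p$ and the shift giving $r\leq q$. Write $\delta=\delta_p=\delta_q$ and $\theta_\delta=\theta^p_\delta=\theta^q_\delta$. Since $p,q$ are isomorphic their lower parts $\langle\theta^p_\alpha:\alpha\leq\delta\rangle$ and $\langle\F^p_{\alpha,\beta}:\alpha<\beta\leq\delta\rangle$ coincide, and I keep all of these unchanged in $r$. Because $A_p\cap A_q$ is an initial segment of both $A_p$ and $A_q$, and $i_p,i_q$ are the order isomorphisms of $\theta_\delta$ onto $A_p,A_q$, there is a single $\gamma\leq\theta_\delta$ with $i_p\rest\gamma=i_q\rest\gamma$ and $i_p[\gamma]=i_q[\gamma]=A_p\cap A_q$.

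Next I choose the new successor step. I take the shift $s_\delta$ to have $\gamma_\delta=\gamma$ and $\eta_\delta$ equal to the order type of $[\gamma,\theta_\delta)$, so that $\theta^r_{\delta+1}=\theta_\delta+\eta_\delta$ and $\F^r_{\delta,\delta+1}=\{\mathrm{id}_{\theta_\delta},s_\delta\}$; in the nondegenerate case $\gamma<\theta_\delta$ the ordinal $\eta_\delta$ is a limit (as $\theta_\delta$ is a limit), hence so is $\theta^r_{\delta+1}$. I then set $A_r=A_p\cup A_q$ and define $i_r\colon\theta^r_{\delta+1}\to A_r$ by $i_r\rest\theta_\delta=i_p$ and $i_r(\theta_\delta+\zeta)=i_q(\gamma+\zeta)$ for $\zeta<\eta_\delta$; the families $\F^r_{\alpha,\delta+1}$ for $\alpha<\delta$ are then forced by the Composition clause applied to $\F^r_{\alpha,\delta}=\F^p_{\alpha,\delta}$ and $\F^r_{\delta,\delta+1}$.

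The verifications are routine, and the one place the geometric hypothesis is used is order-preservation of $i_r$ across the break at $\theta_\delta$: an element of $A_p$ lies below an element of $A_q\setminus A_p$ because $A_p\cap A_q$ is an initial segment of $A_q$ (handling the common part) and $\sup(A_p\setminus A_q)<\inf(A_q\setminus A_p)$ (handling the $p$-only part), so $i_r$ is an order isomorphism of $\theta^r_{\delta+1}$ onto $A_p\cup A_q$. The set $A_r$ is $\omega_1$-full since $A_r\cap I_\beta$ is the longer of the two initial segments $A_p\cap I_\beta$, $A_q\cap I_\beta$. Fullness at level $\delta$ holds because $\mathrm{id}[\theta_\delta]=\theta_\delta$ and $s_\delta[\theta_\delta]=[0,\gamma)\cup[\theta_\delta,\theta^r_{\delta+1})$ together cover $\theta^r_{\delta+1}$; Fullness for $\alpha<\delta$ then follows by pushing the already-known equality $\bigcup\{f[\theta^p_\alpha]:f\in\F^r_{\alpha,\delta}\}=\theta_\delta$ through $\mathrm{id}$ and $s_\delta$. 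Factoring needs no checking, since the only new level $\delta+1$ is a successor. Finally $r\leq p$ is witnessed by $\mathrm{id}\in\F^r_{\delta,\delta+1}$ because $i_r\circ\mathrm{id}=i_p$, and $r\leq q$ by $s_\delta$ because $i_r\circ s_\delta=i_q$ (verified separately on $\xi<\gamma$ and on $\xi=\gamma+\zeta$); since $\delta_r=\delta_p+1$, this $r$ is the desired amalgamation.

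I expect the only real difficulty to be bookkeeping: pinning down $\gamma$, $\eta_\delta$ and the action of $s_\delta$ so that the identity reproduces $A_p$ while the shift reproduces $A_q$, and confirming that $\sup(A_p\setminus A_q)<\inf(A_q\setminus A_p)$ is precisely what makes $i_r$ monotone at the join. The degenerate case $A_p=A_q$ forces $i_p=i_q$ and hence $p=q$, and is dispatched by appending any legitimate successor level.
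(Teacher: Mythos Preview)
Your proof is correct and follows essentially the same construction as the paper's: both build the amalgamation $r$ with $\delta_r=\delta+1$, $A_r=A_p\cup A_q$, the same shift $s_\delta$ at the same splitting point $\gamma$, and the same composed families $\F^r_{\alpha,\delta+1}$. Your write-up is in fact more explicit than the paper's about why $i_r$ is order-preserving, why $A_r$ is $\omega_1$-full, why $\theta^r_{\delta+1}$ is a limit ordinal, and how the identities $i_r\circ\mathrm{id}=i_p$ and $i_r\circ s_\delta=i_q$ witness $r\leq p,q$.
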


\begin{proof} We define a condition $r$ which is the amalgamation of $p$ and $q$.
For simplicity, set $\delta=\delta_p=\delta_p$ and $\theta_\alpha =\theta^p_\alpha =\theta^q_\alpha$,
for all $\alpha \leq \delta$.
Let $\delta_r=\delta +1$ and $A_r=A_p\cup A_q$. Note that, since $A_p$ and
$A_q$ are $\omega_1$-full, then so is $A_r$. Let $\theta^r_{\delta_r}$
be the order type of $A_r$ and $i_r$ the order preserving
bijection between $\theta^r_{\delta_r}$ and $A_r$.
For $\alpha <\beta \leq \delta$ let $\F^r_{\alpha,\beta}= \F^p_{\alpha,\beta}$.
Let $R=A_p \cap A_q$, let $\gamma$ be the order type of $R$
and $\eta$ the order type of $A_p\setminus A_q$ and $A_q \setminus A_p$.
Since $\sup (A_p \setminus A_q) <\inf (A_q \setminus A_p)$ it follows that
$\theta^r_{\delta_r}= \theta_{\delta} + \eta$.
Let $s: \theta_{\delta}\rightarrow \theta^r_{\delta_r}$  be the shift
of $\theta_{\delta}$ at $\gamma$,
i.e. it is the identity on $\gamma$ and $s(\gamma + \xi) = \theta_{\delta}+\xi$,
for all $\xi <\eta$. We let
$\F^r_{\delta,\delta_r}=\{ {\rm id}_{\theta_{\delta}}, s\}$.
Finally, for $\alpha < \delta$ let
$$
\F^r_{\alpha,\delta_r}= \{ g\circ f : f\in \F^p_{\alpha,\delta}, g\in \F^r_{\delta,\delta_r}\}.
$$
Then $r$ is as required.
\end{proof}

\noindent {\bf Remark} Let $p$ and $q$ be as in Lemma~\ref{compatibility} and let $r$
be the amalgamation of $p$ and $q$. Let $i$  be the order preserving bijection
between $A_p$ and $A_q$. What is important for our purposes is that $r$
forces that $\xi \preceq_{\delta_p} i(\xi)$, for all $\xi \in A_p$.

\begin{lemma}\label{cont}
Let $\alpha < \omega_2$. Then, for every $p\in \Pcal$ there is  $r\leq p$ such that $\alpha\in A_r$.
\end{lemma}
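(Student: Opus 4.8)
The plan is to prove this density statement by an explicit construction inside $\Pcal$, using the $\sigma$-closure of $\Pcal$ (Lemma~\ref{clo}) in an essential way. If $\alpha\in A_p$ we simply take $r=p$, so assume $\alpha\notin A_p$ and write $\alpha=\omega_1\cdot\beta+\sigma$ with $\sigma<\omega_1$. Since $A_p$ is $\omega_1$-full, $A_p\cap I_\beta$ is a proper initial segment $[\omega_1\cdot\beta,\omega_1\cdot\beta+\rho)$ of $I_\beta$ with $\rho\le\sigma$, and in particular $A_p$ has no element of $I_\beta$ lying above $\alpha$. The key point to keep in mind throughout is that a single successor extension of a condition only adds, on top of or inside the current top level, a block whose order type is at most the current order type $\theta_{\delta}$ (the successor clause of Definition~\ref{def1}(1) forces $\eta_{\delta}\le\theta_{\delta}$, whence $\theta_{\delta+1}\le\theta_{\delta}\cdot 2$). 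Consequently one extension can at most double the size of the top level, and to place the single ordinal $\alpha$, whose interval $I_\beta$ must be filled up to $\alpha$, we first have to make the condition sufficiently tall.

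First I would carry out a \emph{height phase}: produce a condition $q\le p$ such that the set $A_q\setminus\alpha$ of elements of $A_q$ lying above $\alpha$ has order type $>\sigma$. For this I iterate the basic successor extension. Given any $q'$, one obtains $r'\le q'$ with $\delta_{r'}=\delta_{q'}+1$, $\gamma_{\delta_{q'}}=0$ and $\eta_{\delta_{q'}}=\theta^{q'}_{\delta_{q'}}$ (so $\theta^{r'}_{\delta_{r'}}=\theta^{q'}_{\delta_{q'}}\cdot 2$), $\F^{r'}_{\delta_{q'},\delta_{r'}}=\{\mathrm{id},s_{\delta_{q'}}\}$, and $i_{q'}=i_{r'}\restriction\theta^{q'}_{\delta_{q'}}$; concretely $A_{r'}=A_{q'}\cup B'$, where $B'$ is a fresh block of order type $\theta^{q'}_{\delta_{q'}}$ realized as an initial segment of some interval $I_{\beta'}$ with $\omega_1\cdot\beta'>\max(\alpha,\sup A_{q'})$. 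One checks at once that $A_{r'}$ is again $\omega_1$-full and that conditions (1)--(4) of Definition~\ref{forc} hold, so $r'\in\Pcal$ and $r'\le q'$. Since every point of $B'$ lies above $\alpha$, repeating this step strictly increases $\mathrm{ot}(A\setminus\alpha)$, and at countable limit stages I take a lower bound using the $\sigma$-closure of $\Pcal$, whose top level is the increasing union and is again $\omega_1$-full. As $\sigma<\omega_1$, after countably many steps I reach the desired $q$.

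Then comes the \emph{insertion phase}. Put $\gamma=\mathrm{ot}(A_q\cap\alpha)$ and let $\eta$ be the order type of $A_q\setminus\alpha$, so that $\theta^q_{\delta_q}=\gamma+\eta$ and, by the height phase, $\eta>\sigma$. I form $r\le q$ with $\delta_r=\delta_q+1$ using the shift $s_{\delta_q}$ split at $\gamma$ (so $\theta^r_{\delta_r}=\theta^q_{\delta_q}+\eta$ and $i_q=i_r\circ s_{\delta_q}$), placing the new block $B$, which is forced to have order type exactly $\eta$, entirely inside $I_\beta$ as $B=[\omega_1\cdot\beta+\rho,\omega_1\cdot\beta+\rho+\eta)$. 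Because $\rho\le\sigma<\rho+\eta<\omega_1$, the block $B$ lies in the gap between $\sup(A_q\cap\alpha)=\omega_1\cdot\beta+\rho$ and $\min(A_q\setminus\alpha)$ (the latter belonging to an interval above $I_\beta$), it contains $\alpha$, and $A_r=(A_q\cap\alpha)\cup B\cup(A_q\setminus\alpha)$ differs from $A_q$ only on $I_\beta$, where it remains an initial segment; hence $A_r$ is $\omega_1$-full. A direct check that $i_q=i_r\circ s_{\delta_q}$ and that conditions (1)--(4) hold shows $r\in\Pcal$, $r\le q\le p$, and $\alpha\in A_r$, as required.

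The main obstacle is exactly the tension described in the first paragraph. One cannot insert $\alpha$ in a single step, since filling $I_\beta$ up to $\alpha$ may demand a block far larger than any single successor extension of $p$ can supply; yet $\alpha$, being one ordinal, must already belong to some condition occurring at a \emph{successor} stage of the construction, because a limit top level $\bigcup_n A_{q_n}$ contains $\alpha$ if and only if some $A_{q_n}$ already does. The resolution is to decouple the two tasks: use limits and $\sigma$-closure to accumulate height above $\alpha$ first, and only then spend one shift-extension to drop $\alpha$ into place. The remaining work is the routine, but necessary, verification that $\omega_1$-fullness and the morass axioms are preserved at each successor and limit stage.
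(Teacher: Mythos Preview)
Your argument is correct and uses the same two ingredients as the paper's proof: one-step successor (shift) extensions and the $\sigma$-closure of $\Pcal$ (Lemma~\ref{clo}). The organization differs slightly. The paper shows directly that for any $p$ one can find $r\le p$ with $A_r\cap I_\beta$ a \emph{proper} extension of $A_p\cap I_\beta$, splitting into two cases according to whether $A_p\setminus\omega_1\cdot(\beta+1)$ is nonempty; in the nonempty case it appeals to the amalgamation Lemma~\ref{compatibility}, and in the empty case it does a direct doubling with $\gamma=0$. It then iterates this single step (using $\sigma$-closure at limits) until $\alpha$ appears. You instead separate the work into a \emph{height phase} (repeatedly double above $\alpha$ until $\mathrm{ot}(A_q\setminus\alpha)>\sigma$) followed by a single \emph{insertion} step. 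Your insertion step is exactly the paper's Case~1 move, except that you construct the successor condition by hand rather than quoting Lemma~\ref{compatibility}; and your height phase is the paper's Case~2 move, except that you place the fresh block in a new interval $I_{\beta'}$ above everything rather than inside $I_\beta$. Neither choice matters mathematically; the paper's organization has the mild advantage of not needing to argue that a single final shift suffices, while yours avoids the case split inside the iteration.
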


\begin{proof}
Let $\beta$ be such that $\alpha \in I_\beta$. We show that every condition $p$
has an extension $r$ such that $A_r\cap I_\beta$ is a proper extension of $A_p \cap I_\beta$.
Since $A_r\cap I_\beta$ is an initial segment of $I_\beta$, for every $r$, the order
type of $I_\beta$ is $\omega_1$ and $\Pcal$ is $\sigma$-closed,
by iterating this operation countably many times we can find a condition $s \leq p$
such that $\alpha \in A_s$. So, fix some $p\in \Pcal$.
Assume first that $A_p \setminus \omega_1\cdot (\beta+1)$
is non empty and let $\eta$ be its order type.
Note that $\eta$ is a countable ordinal. Let $\mu= \min(I_\beta \setminus A_p)$.
Since $A_p$ is $\omega_1$-full we have that $A_p \cap [\mu, \omega_1 \cdot (\beta +1))=\emptyset$.
Let $\nu = \mu + \eta$
and let $A_q = (A_p \cap \omega_1 \cdot \beta) \cup [\omega_1 \cdot \beta, \nu)$.
Then $A_p$ and $A_q$ have the same order type, $A_p \cap A_q$ is an initial segment
of both of them, and $\sup (A_q \setminus A_p) <\inf (A_p\setminus A_q)$. Also
note that  $A_p \cup A_q$ is $\omega_1$-full.
Let $i_q$ be the isomorphism between $\theta^p_{\delta_p}$ and $A_q$.
Let $\delta_p=\delta_q$, $\theta^p_\alpha = \theta^q_\alpha$, for all $\alpha \leq \delta_p$,
$\F_{\alpha,\beta}^p=\F_{\alpha,\beta}^q$, for all $\alpha <\beta \leq \theta^p_{\delta_p}$.
Then $p$ and $q$ satisfy the assumptions of Lemma~\ref{compatibility}.
Let $r$ be their amalgamation. Then $r\leq p$ and $A_r\cap I_\beta$ is a proper
extension of $A_p \cap I_\beta$, as required.

Assume now that $A_p \subseteq \omega_1 \cdot (\beta +1)$. For simplicity,
let $\delta=\delta_p$ and $\theta_\alpha= \theta^p_\alpha$, for $\alpha \leq \delta$.
Recall that this implies that $\theta_\delta$ is the order type of $A_p$.  Let $\mu = \min (I_\beta \setminus A_p)$.
We are going to define the condition $r$ directly. We let $A_r = A_p \cup [\mu, \mu +\theta_\delta)$.
We let $\delta_r = \delta +1$. We let  $\theta^r_\alpha =\theta_\alpha$, for all $\alpha \leq \delta$
and $\theta^r_{\delta+1}= \theta_\delta \cdot 2$. We let $\F^r_{\alpha,\beta}=\F^p_{\alpha,\beta}$,
for all $\alpha <\beta \leq \delta$. We let $\F^r_{\delta,\delta +1}= \{ {\rm id}_{\theta_\delta}, s\}$,
where ${\rm id}_{\theta_\delta}$ is the identity on $\theta_\delta$ and $s$ is the shift
of $\theta_\delta$ at $0$, i.e. $s(\rho)=\theta_\delta + \rho$, for all $\rho <\theta_\delta$.
For $\alpha <\delta$ we let $\F^r_{\alpha,\delta +1}$ consist of all functions
of the form $g \circ f$, where $f\in \F^p_{\alpha,\delta}$ and $g\in \F^r_{\delta,\delta +1}$.
Finally, let $i_r$ be the order preserving bijection between $\theta_\delta \cdot 2$ and
$A_r$. Then $r$ is an extension of $p$ and $A_r\cap I_\beta$ is a proper extension
of $A_p \cap I_\beta$.
\end{proof}

\begin{lemma}\label{aleph_2-cc} Assume ${\rm CH}$. Then $\Pcal$ satisfies
the $\aleph_2$-chain condition.
\end{lemma}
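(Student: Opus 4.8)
The plan is to prove the contrapositive form directly: any family of $\aleph_2$ conditions in $\Pcal$ contains two compatible members, which immediately gives that every antichain has size $<\aleph_2$. So I fix a family $\{p_\nu:\nu<\omega_2\}$ of distinct conditions and successively thin it to an $\aleph_2$-sized subfamily on which two conditions can be amalgamated via Lemma~\ref{compatibility}. The whole argument uses ${\rm CH}$ only to keep the number of relevant ``types'' strictly below $\aleph_2$ at each thinning step.

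First I would reduce to isomorphic conditions. Apart from the set $A_p\subseteq\omega_2$ (and the bijection $i_p$, which is just the order isomorphism of $\theta^p_{\delta_p}$ onto $A_p$ and hence determined by $A_p$), a condition is given by its \emph{morass part} $\langle\langle\theta^p_\alpha:\alpha\le\delta_p\rangle,\langle\F^p_{\alpha,\beta}:\alpha<\beta\le\delta_p\rangle\rangle$, which is a hereditarily countable object: $\delta_p<\omega_1$, each $\theta^p_\alpha$ is countable, and each $\F^p_{\alpha,\beta}$ is a countable set of maps between countable ordinals (as is standard for these morass approximations, built by the countable steps of Lemmas~\ref{clo}--\ref{cont}). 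Hence under ${\rm CH}$ there are at most $\aleph_1$ possible morass parts, and by pigeonhole I may pass to $\aleph_2$ conditions all sharing one morass part; any two of these are then isomorphic in the sense preceding Lemma~\ref{compatibility}, and each is determined by its $\omega_1$-full countable set $A_p$.

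Next I would arrange a common \emph{initial} part. Since ${\rm CH}$ gives $\mu^{\aleph_0}<\aleph_2$ for every $\mu<\aleph_2$, the $\Delta$-system lemma applies to the countable sets $\{A_p\}$, so I may thin to an $\aleph_2$-sized subfamily with $A_p\cap A_q=A^*$ for all $p\neq q$, where $A^*$ is the root. A raw $\Delta$-system does \emph{not} make $A^*$ an initial segment of the $A_p$, which is exactly what Lemma~\ref{compatibility} needs, and overcoming this is the main point. To fix it I set $\eta^*=\sup(A^*)+1<\omega_2$ and observe that $A_p\cap\eta^*$ is a countable subset of $\eta^*$, of which there are at most $|\eta^*|^{\aleph_0}\le\aleph_1$ under ${\rm CH}$; pigeonholing again I may assume $A_p\cap\eta^*$ is the same set for all $p$. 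For $p\neq q$ this common value is contained in $A_p\cap A_q=A^*$ and, since $A^*\subseteq\eta^*$, also contains $A^*$, so it equals $A^*$. Thus every element of $A_p$ below $\eta^*$ lies in $A^*$ while every element of $A_p\setminus A^*$ is $\ge\eta^*>\sup(A^*)$, so $A^*$ is an initial segment of $A_p$ and the tail $A_p\setminus A^*$ lies entirely above it.

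Finally I would separate two tails. The tails $A_p\setminus A^*$ are pairwise disjoint (their pairwise intersections lie in $A^*$) and, discarding the at most one condition with $A_p=A^*$, nonempty. Setting $m_p=\min(A_p\setminus A^*)$, disjointness forces the $m_p$ to be pairwise distinct, so $\{m_p\}$ is a set of $\aleph_2$ ordinals below $\omega_2$ and is therefore unbounded. Choosing any $p_0$ in the family and then $q$ with $m_q>\sup(A_{p_0})$, I obtain that $A_{p_0}\cap A_q=A^*$ is an initial segment of both, together with
$$\sup(A_{p_0}\setminus A_q)=\sup(A_{p_0}\setminus A^*)<m_q=\inf(A_q\setminus A^*)=\inf(A_q\setminus A_{p_0}).$$
By Lemma~\ref{compatibility} the isomorphic conditions $p_0$ and $q$ are then directly compatible, so the original family was not an antichain. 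I expect the delicate step to be the middle one: the extra pigeonhole on $A_p\cap\eta^*$ is precisely what upgrades the $\Delta$-system root to an initial segment, which is the hypothesis that lets Lemma~\ref{compatibility} be invoked.
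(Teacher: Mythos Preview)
Your argument is correct and follows essentially the same route as the paper's proof: thin to isomorphic conditions using ${\rm CH}$, apply the $\Delta$-system lemma to the sets $A_p$, and then invoke Lemma~\ref{compatibility}. The paper is terser---it simply asserts that ``by ${\rm CH}$ again and the $\Delta$-system lemma'' one finds $p,q$ with $A_p\cap A_q$ an initial segment of both and the tails separated---whereas you spell out the extra pigeonhole on $A_p\cap\eta^*$ that upgrades the $\Delta$-system root to an initial segment; this is exactly the detail the paper suppresses.
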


\begin{proof} Let $\A$ be a subset of $\Pcal$ of size $\aleph_2$. By ${\rm CH}$
we may assume that all the conditions in $\A$ are compatible.
Therefore, we can fix an ordinal $\delta$, a sequence $\langle \theta_\alpha : \alpha \leq \delta\rangle$
and a sequence $\langle \F_{\alpha,\beta}: \alpha <\beta \leq \delta$ such that
every condition $p$ in $A$ is of the form
$p = \langle \langle \theta_\alpha: \alpha \leq \delta\rangle,
\langle \F_{\alpha,\beta}: \alpha \leq \beta \leq \delta \rangle, A_p,i_p \rangle$,
for some $A_p$ of order type $\theta_\delta$ where $i_p$ is the order preserving
bijection between $\theta_\delta$ and $A_p$.
 By ${\rm CH}$ again and the $\Delta$-system lemma, we may find
distinct $p,q\in \A$ such that $A_p \cap A_q$ is an initial segment of both
$A_p$ and $A_q$ and such that $\sup (A_p \setminus A_q) <\inf (A_q \setminus A_p)$.
By Lemma~\ref{compatibility} $p$ and $q$ are compatible, as required.
\end{proof}

Assume ${\rm CH}$. By Lemmas~\ref{clo} and \ref{aleph_2-cc} $\Pcal$ preserves
cardinals. Let $G$ be a $\Pcal$-generic filter over $V$.
For $\alpha <\omega_1$, we let $\theta^G_\alpha$ be equal to $\theta^p_\alpha$,
for any $p\in G$ such that $\alpha \leq \delta_p$. We also let $\theta^G_{\omega_1}=\omega_2$.
For $\alpha <\beta <\omega_1$ we let $\F^G_{\alpha,\beta}$ be equal to
$F^p_{\alpha,\beta}$, for any $p\in G$ such that $\beta \leq \delta_p$.
For $\alpha <\omega_1$ we define:
$$
\F^G_{\alpha,\omega_1} = \{ i_p \circ f : f\in \F^p_{\alpha,\delta_p}, p \in G \mbox{ and }
\alpha \leq \delta_p \}.
$$
It follows that
$$
{\morass}_G = \langle \langle \theta^G_\alpha : \alpha \leq \omega_1 \rangle,
\langle \F^G_{\alpha,\beta}: \alpha <\beta \leq \omega_1 \rangle \rangle
$$
is a simplified $(\omega_1,1)$-morass. Let $\dot{\morass}$ be the canonical
$\Pcal$-name for $\morass_G$.

\begin{lemma}
$\force_{\Pcal} \dot{\morass}\mbox{ is stationary}.$
\end{lemma}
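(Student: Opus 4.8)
The plan is to show that the set $D_{\dot F}=\{q : q\force_{\Pcal} A_q\in C_{\dot F}\}$ is dense for every $\Pcal$-name $\dot F$ for a function from $[\omega_2]^{<\omega}$ to $\omega_2$, where $C_{\dot F}$ is the club of countable subsets of $\omega_2$ closed under $\dot F$. Two preliminary observations drive the reduction. First, in $V[G]$ every club in $[\omega_2]^\omega$ contains one of the form $C_F$, so it suffices to meet all such $C_F$. Second, for every $q\in G$ the set $A_q$ itself belongs to $\mathcal S(\morass_G)$: picking (by genericity) some $p\in G$ with $p\le q$ and $\delta_p>\delta_q$, the definition of the ordering of $\Pcal$ gives $h\in\F^p_{\delta_q,\delta_p}$ with $i_q=i_p\circ h$, so $i_q\in\F^G_{\delta_q,\omega_1}$ and $A_q=i_q[\theta^G_{\delta_q}]\in\mathcal S(\morass_G)$. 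Hence once $D_{\dot F}$ is dense, a generic $q\in G\cap D_{\dot F}$ yields $A_q\in\mathcal S(\morass_G)\cap C_F$, which is exactly stationarity.

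To prove density, fix $p_0\in\Pcal$ and a name $\dot F$, and choose a countable $N\prec H_\tau$ (in $V$) with $p_0,\Pcal,\dot F\in N$. The core of the argument is to build an $(N,\Pcal)$-generic condition $q\le p_0$ with $A_q=N\cap\omega_2$. I would first check that $N\cap\omega_2$ is a countable $\omega_1$-full set: each interval $I_\beta$ with $\beta\notin N$ misses $N$ entirely, while for $\beta\in N$ the set $I_\beta\cap N$ is an initial segment of order type $N\cap\omega_1$; thus $N\cap\omega_2$ is a legitimate value of $A_q$. Then, enumerating the (countably many) dense subsets of $\Pcal$ lying in $N$ as $(D_n)_n$ and the ordinals of $N\cap\omega_2$ as $(\gamma_n)_n$, I would build a decreasing sequence $(q_n)_n$ of conditions, all lying in $N$, with $q_0\le p_0$, $q_{n+1}\in D_n\cap N$ (by elementarity), and $\gamma_n\in A_{q_{n+1}}$ (by Lemma~\ref{cont} applied inside $N$). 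Taking the lower bound $q$ furnished by Lemma~\ref{clo} gives $A_q=\bigcup_n A_{q_n}$; since each $q_n\in N$ forces $A_{q_n}\subseteq N\cap\omega_2$, while the $\gamma_n$ exhaust $N\cap\omega_2$, we obtain $A_q=N\cap\omega_2$ exactly.

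Finally I would use genericity to conclude closure under $\dot F$. Because $q$ meets every dense subset of $\Pcal$ in $N$, it is $(N,\Pcal)$-generic, so $q\force N[\dot G]\cap\mathrm{Ord}=N\cap\mathrm{Ord}$. Now any $\bar a\in[A_q]^{<\omega}=[N\cap\omega_2]^{<\omega}$ lies in $N$, so $\dot F(\bar a)$ is an ordinal of $N[\dot G]$ below $\omega_2$, whence $q\force\dot F(\bar a)\in N\cap\omega_2=A_q$. Thus $q\force A_q\in C_{\dot F}$, i.e. $q\in D_{\dot F}$, and density is established.

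I expect the main obstacle to be the simultaneous bookkeeping in the middle step: the condition $q$ must be genuinely $(N,\Pcal)$-generic (needed for the closure argument) and at the same time satisfy $A_q=N\cap\omega_2$ exactly (needed both for $A_q\in\mathcal S(\morass_G)$ and for the inclusion $A_{q_n}\subseteq N$). Interleaving the two demands—extending into $D_n$ and then, via Lemma~\ref{cont}, absorbing $\gamma_n$, all while staying inside $N$ so that each $A_{q_n}$ is a subset of $N$—is the delicate part, together with verifying that the lower bound from Lemma~\ref{clo} preserves $\omega_1$-fullness so that $q$ is an honest condition.
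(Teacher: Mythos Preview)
Your argument is correct, but it takes a noticeably different route from the paper's. The paper works directly with an arbitrary club name $\dot C$ and uses a simple sandwich construction: starting from $p_0=p$, it alternately extends to $p_{n+1}$ forcing some countable $B_n\in\dot C$ with $A_{p_n}\subseteq B_n$, and then (via Lemma~\ref{cont}) absorbs $B_n$ into $A_{p_{n+1}}$. The limit $q$ from Lemma~\ref{clo} then has $A_q=\bigcup_n A_{p_n}=\bigcup_n B_n$, and closure of $\dot C$ gives $q\Vdash A_q\in\dot C$ immediately. No elementary submodels, no reduction to clubs of the form $C_F$, no $(N,\Pcal)$-genericity.

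Your approach trades this directness for a standard elementary-submodel template: reduce to $C_{\dot F}$, pick $N\prec H_\tau$, build an $(N,\Pcal)$-generic $q$ with $A_q=N\cap\omega_2$, and read off closure under $\dot F$ from $N[\dot G]\cap{\rm Ord}=N\cap{\rm Ord}$. This is heavier machinery for the same conclusion, but it has two modest advantages: it makes explicit why $A_q\in\mathcal S(\dot\morass)$ (the paper asserts this without comment), and the verification that $N\cap\omega_2$ is $\omega_1$-full is a nice structural observation. The paper's interleaving argument, on the other hand, avoids the bookkeeping you flag as the ``main obstacle'' entirely, since it never needs $A_q$ to equal a prescribed set---only to be a union of members of $\dot C$.
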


\begin{proof} Suppose $p\force_{\Pcal} \dot{C}$ is a club in $[\omega_2]^{\omega}$.
Set $p_0 = p$. By using Lemmma~\ref{cont} and ~\ref{clo} repeatedly
and the fact that $p$ forces $\dot{C}$ to be unbounded in $[\omega_2]^\omega$,
we can build a decreasing sequence $(p_n)_n$ of conditions in $\Pcal$ and
an increasing sequence $(B_n)_n$ of countable subsets of $\omega_2$ such that
$A_{p_n} \subseteq B_n \subseteq A_{p_{n+1}}$ and
$p_{n+1}\force_{\Pcal} B_n \in \dot{C}$, for all $n$.
Let $q$ be the limit of the sequence $(p_n)_n$ as  in   Lemma~\ref{clo}.
Then $A_q = \bigcup _n A_{p_n} = \bigcup_n B_n$.
Since $\dot{C}$ is forced by $p$ to be closed and $q\leq p$ it
follows that $q\force_{\Pcal} A_q \in \dot{C}$. Since $q\force_{\Pcal} A_q \in {\mathcal S}(\dot{\morass})$
and $\dot{C}$ was arbitrary, it follows that $\dot{\morass}$ is forced to be stationary.
\end{proof}

\begin{lemma}\label{chain-condition} Assume ${\rm CH}$ holds in $V$. Then
$\force_\Pcal \dot{\morass} \mbox{ satisfies the  } \aleph_2$-antichain condition.
\end{lemma}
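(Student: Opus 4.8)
The plan is to show that no condition can force some $\dot X\subseteq(\omega_2)^\omega$ of size $\aleph_2$ to be a $\preceq$-antichain. So I fix $p\in\Pcal$ with $p\force |\dot X|=\aleph_2$ and aim to produce $r\le p$ forcing two \emph{distinct} members of $\dot X$ that are $\preceq$-comparable coordinatewise. First I extract ground-model witnesses. Since cardinals are preserved (by Lemmas~\ref{clo} and~\ref{aleph_2-cc} under CH), I may choose names $\dot s_\xi$ $(\xi<\omega_2)$ so that $p$ forces the $\dot s_\xi$ to enumerate distinct elements of $\dot X$. Each $\dot s_\xi$ names an element of $(\omega_2)^\omega$, i.e. an $\omega$-sequence of ordinals, so using the $\sigma$-closure of $\Pcal$ (Lemma~\ref{clo}) I build a descending $\omega$-chain below $p$ deciding every coordinate and pass to a lower bound, obtaining $q_\xi\le p$ and $s_\xi\in(\omega_2)^\omega\cap V$ with $q_\xi\force \dot s_\xi=\check s_\xi$. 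Applying Lemma~\ref{cont} coordinatewise together with $\sigma$-closure, I further extend each $q_\xi$ so that $\{s_\xi(n):n<\omega\}\subseteq A_{q_\xi}$.

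The combinatorial heart is to find two indices $\xi\ne\eta$ whose conditions are amalgamable via Lemma~\ref{compatibility} in such a way that the resulting order isomorphism $A_{q_\xi}\to A_{q_\eta}$ carries $s_\xi$ onto $s_\eta$ coordinatewise, and for this I thin the family $\{(q_\xi,s_\xi):\xi<\omega_2\}$ using CH three times. The full data $\langle\theta_\alpha\rangle,\langle\F_{\alpha,\beta}\rangle$ of a condition lies in $H_{\omega_1}$, which has size $\aleph_1$ under CH, so I may assume all $q_\xi$ share a single isomorphism type (in the sense preceding Lemma~\ref{compatibility}); in particular they have a common $\delta$ and a common order type $\theta_\delta$ of $A_{q_\xi}$, with order isomorphism $i_{q_\xi}:\theta_\delta\to A_{q_\xi}$, and any two of them are isomorphic as required by Lemma~\ref{compatibility}. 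Setting $t_\xi(n)=i_{q_\xi}^{-1}(s_\xi(n))\in\theta_\delta$, the sequence $t_\xi$ lives in $(\theta_\delta)^\omega$, of which there are at most $2^{\aleph_0}=\aleph_1$ under CH, so I may also assume $t_\xi=t$ is constant, i.e. $s_\xi(n)=i_{q_\xi}(t(n))$ for all $\xi,n$. Finally, exactly as in the proof of Lemma~\ref{aleph_2-cc}, CH and the $\Delta$-system lemma yield distinct $\xi,\eta$ with $A_{q_\xi}\cap A_{q_\eta}$ an initial segment of both and $\sup(A_{q_\xi}\setminus A_{q_\eta})<\inf(A_{q_\eta}\setminus A_{q_\xi})$.

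Now I amalgamate. Lemma~\ref{compatibility} gives $r\le q_\xi,q_\eta$, and the Remark following it guarantees that $r$ forces $\zeta\preceq_{\delta} i(\zeta)$, hence $\zeta\preceq i(\zeta)$, for every $\zeta\in A_{q_\xi}$, where $i=i_{q_\eta}\circ i_{q_\xi}^{-1}:A_{q_\xi}\to A_{q_\eta}$ is the order isomorphism. Since $t_\xi=t_\eta=t$, I get $i(s_\xi(n))=i_{q_\eta}(t(n))=s_\eta(n)$ for every $n$, so $r$ forces $s_\xi(n)\preceq s_\eta(n)$ for all $n$. As $r\le p$, it forces $\check s_\xi,\check s_\eta\in\dot X$ and $\check s_\xi\ne\check s_\eta$ (indeed $r$ decides both as the displayed ground-model values, so $s_\xi\ne s_\eta$), whence $r$ forces $\dot X$ to contain two distinct $\preceq$-comparable sequences. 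Since $p$ was arbitrary, this establishes $\force_\Pcal$ that no $\aleph_2$-sized $\preceq$-antichain in $(\omega_2)^\omega$ exists, i.e. that $\dot\morass$ satisfies the $\aleph_2$-antichain condition. The main obstacle is the alignment step: making a \emph{single} amalgamating isomorphism $i$ send $s_\xi$ to $s_\eta$ simultaneously in all coordinates. This is exactly what forces the threefold use of CH — common type, common relative position $t$, and a common $\Delta$-system root — and it is the point at which the Remark's conclusion $\zeta\preceq i(\zeta)$ is converted into the desired product comparison $s_\xi(n)\preceq s_\eta(n)$.
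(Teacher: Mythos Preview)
Your proof is correct and follows essentially the same approach as the paper: extract $\aleph_2$ many ground-model witnesses with conditions whose $A$-sets contain the relevant coordinates, use CH to thin to a common isomorphism type and a common relative position $t\in(\theta_\delta)^\omega$, apply the $\Delta$-system lemma, and amalgamate via Lemma~\ref{compatibility} so that the Remark yields coordinatewise $\preceq$-comparability. The only differences are expository---you spell out the $\sigma$-closure argument for deciding the names and the verification that $s_\xi\neq s_\eta$, which the paper leaves implicit.
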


\begin{proof} Suppose $p\in\Pcal$ forces that $\dot{X}$ is a subset of  $(\omega_2)^\omega$
 of size $\aleph_2$. We can find a subset $S$ of $(\omega_2)^\omega$
 of size $\aleph_2$ and, for each $s\in S$, a condition $p_s\leq p$ such
 that $p_s \force_{\Pcal} s \in \dot{X}$. By Lemma~\ref{cont} we may assume
 that $\ran s \subseteq A_{p_s}$, for all $s$. By ${\rm CH}$ we may
 assume that the conditions $p_s$, for $s\in S$, are all isomorphic.
 Let us  fix an ordinal $\delta$, a sequence $\langle \theta_\alpha : \alpha \leq \delta\rangle$
and a sequence $\langle \F_{\alpha,\beta}: \alpha <\beta \leq \delta \rangle$ such that
every condition $p_s$, for $s \in S$,  is of the form
$p_s = \langle \langle \theta_\alpha: \alpha \leq \delta\rangle,
\langle \F_{\alpha,\beta}: \alpha \leq \beta \leq \delta \rangle, A_{p_s},i_{p_s} \rangle$,
for some $A_{p_s}$ of order type $\theta_\delta$, where $i_{p_s}$ is the order preserving
bijection between $\theta_\delta$ and $A_{p_s}$.
Further, again by ${\rm CH}$, we may assume that there are fixed ordinals $\xi_n <\theta_\delta$, for
$n<\omega$, such that $s(n)= i_{p_s}(\xi_n)$, for all $s\in S$ and all $n$.
By  the $\Delta$-system lemma, we may find
distinct $s,t\in S$ such that $A_{p_s} \cap A_{p_t}$ is an initial segment of both
$A_{p_s}$ and $A_{p_t}$ and such that $\sup (A_{p_s} \setminus A_{p_t}) <\inf (A_{p_t} \setminus A_{p_s})$.
Let $r$ be the amalgamation of $p_s$ and $p_t$.  Then $r\force_{\Pcal}s,t\in \dot{X}$.
By the remark following Lemma~\ref{compatibility} it follows that
that $r \force_{\Pcal} s(n) \preceq_\delta t(n)$, for all $n$.
Therefore $r$ forces that $\dot{X}$ is not an antichain in $(\omega_2,\preceq)^\omega$, as required.
\end{proof}

By putting together the results of this section we obtain the following.

\begin{theorem}\label{persistency-theorem}
It is relatively consistent with ${\rm ZFC + CH}$ that there exist
a downward closed subfamily $\F$ of $\Fn(\omega_2, \omega_1, \omega_1)$ which is
strategically persistent but does not have a $\sigma$-closed persistent subfamily.
\qed
\end{theorem}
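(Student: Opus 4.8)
The plan is to realize the desired family $\F$ in a forcing extension by the poset $\Pcal$ of Definition~\ref{forc}, taking $\F=\F(\morass_G)$, where $\morass_G$ is the generically added simplified $(\omega_1,1)$-morass. Every ingredient has already been assembled in the preceding lemmas, so the argument is essentially one of bookkeeping: I must only verify that the three hypotheses needed to invoke Lemma~\ref{sigma-persistent}, namely ${\rm CH}$, stationarity, and the $\aleph_2$-antichain condition, all hold \emph{simultaneously} in the extension, and that $\F(\morass_G)$ meanwhile retains the positive properties given by Lemma~\ref{strategic} and by the remark following Definition~\ref{F(morass)}.

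First I would start with a ground model $V$ of ${\rm ZFC}+{\rm CH}$, fix a $\Pcal$-generic filter $G$ over $V$, and pin down the cardinal arithmetic of $V[G]$. By Lemma~\ref{clo} the poset $\Pcal$ is $\sigma$-closed, so it adds no new reals and preserves $\omega_1$; since ${\rm CH}$ holds in $V$ and $2^{\aleph_0}$ is computed identically in $V$ and $V[G]$, ${\rm CH}$ survives into $V[G]$. By Lemma~\ref{aleph_2-cc}, which uses ${\rm CH}$ in $V$, the poset $\Pcal$ satisfies the $\aleph_2$-chain condition and hence preserves cofinalities $\ge\aleph_2$ and in particular $\omega_2$. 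Thus $\Pcal$ preserves all cardinals and $V[G]\models{\rm CH}$.

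Next, working in $V[G]$, I would let $\morass=\morass_G$ be the generic morass read off from $G$ by the construction preceding the stationarity lemma, and set $\F=\F(\morass)$. The lemma establishing that $\force_{\Pcal}\dot{\morass}$ is forced to be stationary guarantees that $\morass$ is stationary, and Lemma~\ref{chain-condition} guarantees that $\morass$ satisfies the $\aleph_2$-antichain condition. By the observation following Definition~\ref{F(morass)}, the family $\F$ is closed under subfunctions, hence downward closed, and by Lemma~\ref{strategic} it is strategically persistent. Finally, since ${\rm CH}$ holds in $V[G]$ and $\morass$ is both stationary and satisfies the $\aleph_2$-antichain condition, Lemma~\ref{sigma-persistent} applies and shows that $\F$ has no $\sigma$-closed persistent subfamily. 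This exhibits a downward closed, strategically persistent $\F$ with no $\sigma$-closed persistent subfamily in $V[G]$, as required.

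There is no genuinely hard obstacle left, since the substantive work is carried by Lemmas~\ref{strategic} and~\ref{sigma-persistent}; the one point I would watch most carefully is precisely the joint validity of the three hypotheses of Lemma~\ref{sigma-persistent}. Each is controlled by a separate feature of $\Pcal$: ${\rm CH}$ by $\sigma$-closure (no new reals), the $\aleph_2$-antichain condition by the amalgamation underlying the $\aleph_2$-cc (Lemmas~\ref{compatibility} and~\ref{aleph_2-cc}), and stationarity by a genericity argument that again leans on $\sigma$-closure. Because the $\aleph_2$-cc argument itself presupposes ${\rm CH}$ in the ground model, I would confirm that the very hypothesis yielding cardinal preservation is the one underwriting the combinatorial properties of $\morass_G$; this is coherent, as all of these appeals invoke only ${\rm CH}$ in $V$, which is granted.
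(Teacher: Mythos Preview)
Your proposal is correct and follows exactly the approach the paper intends: the theorem is stated with a \qed{} and the sentence ``By putting together the results of this section we obtain the following,'' and your write-up simply makes this assembly explicit by checking that in $V[G]$ one has ${\rm CH}$ (from $\sigma$-closure), stationarity, and the $\aleph_2$-antichain condition for $\morass_G$, and then invoking Lemmas~\ref{strategic} and~\ref{sigma-persistent} together with the downward closure noted after Definition~\ref{F(morass)}. There is nothing to add.
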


\section{The main theorem}

The goal of this section is to prove Theorem~\ref{main}. Before we do
that we show that if $\A \simeq^p_{\aleph_1,\aleph_1} \B$ then
we can find an $\omega_1$-back and forth family $\I$ of partial isomorphisms
between $\A$ and $\B$ with additional closure properties.
Recall that we defined $\I$ to be $\sigma$-closed if  every
increasing sequence $(p_n)_n$ of members of $\I$ has an upper bound in $\I$.
We will say that $\I$ is {\em strongly $\sigma$-closed} if $\bigcup_n p_n \in \I$,
for every such sequence $(p_n)_n$. We will need the following.

\begin{lemma}\label{sigma-closure} Assume ${\rm CH}$ and let $\A$ and $\B$
be two structures of size $\aleph_2$ in the same vocabulary such that
$\A \simeq^p_{\aleph_1,\aleph_1} \B$. Then there is an $\omega_1$-back and
forth set $\mathcal J$  for $\A$ and $\B$ which is strongly $\sigma$-closed.
\end{lemma}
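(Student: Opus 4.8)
The plan is to start from a witnessing $\sigma$-closed back-and-forth set $\I$ for $\A\simeq^p_{\aleph_1,\aleph_1}\B$ and to manufacture $\J$ by closing $\I$ under unions of increasing $\omega$-chains, the only real work being to keep the back-and-forth property alive through this closure. A useful first reduction is that, for a strongly $\sigma$-closed $\J$, the full $\omega_1$-back-and-forth property follows from a single-step version: if for every $p\in\J$ and every single $a\in A$ (resp. $b\in B$) there is $q\in\J$ with $p\subseteq q$ and $a\in\dom q$ (resp. $b\in\ran q$), then an arbitrary demand $A_0,B_0$ of size $\aleph_0$ is met by listing its points as $\{d_k:k<\omega\}$, building an increasing chain $p=q_0\subseteq q_1\subseteq\cdots$ in $\J$ that absorbs $d_k$ at step $k$, and taking $\bigcup_k q_k$, which lies in $\J$ precisely because $\J$ is strongly $\sigma$-closed. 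So it suffices to produce a strongly $\sigma$-closed $\J\supseteq\I$ with the single-step property.

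I would define $\J$ to be the closure of $\I$ under unions of increasing $\omega$-sequences, stratified as $\J_0=\I$, $\J_{\alpha+1}=\J_\alpha\cup\{\bigcup_n p_n:(p_n)\subseteq\J_\alpha\text{ increasing}\}$, $\J_\lambda=\bigcup_{\alpha<\lambda}\J_\alpha$, and $\J=\J_{\omega_1}$. Strong $\sigma$-closure is then automatic: an increasing sequence $(r_k)$ of members of $\J$ has all its terms in some fixed $\J_\alpha$ with $\alpha<\omega_1$ (take the supremum of the levels of the $r_k$), so $\bigcup_k r_k\in\J_{\alpha+1}\subseteq\J$. The single-step property is clear at the bottom level and for genuine unions of $\I$-chains: if $p=\bigcup_n p_n$ with $p_n\in\I$ increasing, then by $\sigma$-closure of $\I$ there is $b\in\I$ with $p\subseteq b$, and a single back-and-forth move inside $\I$ extends $b$—hence $p$—to a member of $\I\subseteq\J$ covering the required point.

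The main obstacle is the single-step property for members of $\J$ that are unions at higher levels, and it is genuine rather than cosmetic. Equivalently, the issue is whether one can keep $\J$ inside the downward closure $\I^{\downarrow}=\{p:p\subseteq b\text{ for some }b\in\I\}$ while closing under unions; if $\J\subseteq\I^\downarrow$, the previous paragraph's argument supplies every single-step extension. The trouble is that a back-and-forth extension of a piece $p_n$ chooses the images of the new points on its own, and these choices may clash with a later piece $p_{n+1}$ of the same union; concretely, two members of $\I$ lying below one countable partial isomorphism need not amalgamate to a member of $\I$, because the back-and-forth property produces extensions but never lets one prescribe their values. Consequently $\I^\downarrow$ itself need not be strongly $\sigma$-closed, and the naive induction on levels breaks down exactly at the successor step.

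To get around this I would run a bookkeeping recursion of length $\omega_2$, which is legitimate under ${\rm CH}$ since there are then only $\aleph_2$ countable partial isomorphisms between $\A$ and $\B$ and $\aleph_1^{\aleph_0}=\aleph_1$. I would build $\J=\bigcup_{\xi<\omega_2}\J_\xi$ as a continuous increasing union of families of size $\le\aleph_1$, each contained in $\I^\downarrow$ and closed under unions of its own increasing $\omega$-sequences, while meeting the $\aleph_2$ domain and range demands one at a time. The coherence invariant that makes this work—and which is the technical heart—is that all extensions at a given stage are taken along a single increasing chain of $\I$-elements fixed in advance, so that every increasing $\omega$-sequence that ever occurs among the members is a chain sitting below a common element of $\I$; then $\sigma$-closure of $\I$ returns its union to $\I^\downarrow$, and back-and-forth of $\I$ delivers the single-step extensions. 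Verifying that this invariant survives the limit stages of the recursion, and that $\aleph_1^{\aleph_0}=\aleph_1$ indeed bounds the closure under $\omega$-unions at each stage, is where ${\rm CH}$ is used essentially.
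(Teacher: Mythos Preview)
You correctly isolate the obstacle---$\I^\downarrow$ need not be strongly $\sigma$-closed, so naively closing under $\omega$-unions can leave $\I^\downarrow$ and kill the single-step extension property---and your reduction to single steps is fine. The gap is that your ``coherence invariant'' is never stated, and the place where it must act, namely cofinality-$\omega$ limits of your length-$\omega_2$ recursion, is exactly where the obstacle bites. If at successor stages you add one element of $\I$ at a time, then at the first cofinality-$\omega$ limit new $\omega$-chains appear whose members come from cofinally many earlier stages; adding their unions produces elements of $\I^\downarrow\setminus\I$, and $\omega$-chains through \emph{those} are precisely the ones $\I^\downarrow$ fails to bound. The phrase ``a single increasing chain of $\I$-elements fixed in advance'' cannot be taken literally---a length-$\omega_2$ chain in $\I$ would have uncountable unions at cofinality-$\omega_1$ limits---and you give no other reading; your closing sentence about ``verifying that this invariant survives the limit stages'' concedes rather than discharges the point.

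The paper avoids the recursion altogether. Using CH it enumerates $\I=\{p_\alpha:\alpha<\omega_2\}$ with $p_0=\emptyset$ and defines $\mathcal J$ \emph{intrinsically}: $q\in\mathcal J$ iff $q$ is a permutation of some countable $D_q\ni 0$ with (1) $\alpha\in D_q\Rightarrow\dom{p_\alpha}\cup\ran{p_\alpha}\subseteq D_q$, and (2) whenever $\alpha\in D_q$ and $p_\alpha\subseteq q$, every $\xi\in D_q$ lies in $\dom{p_\beta}\cap\ran{p_\beta}$ for some $\beta\in D_q$ with $p_\alpha\subseteq p_\beta\subseteq q$. These clauses are visibly preserved under unions of increasing $\omega$-chains, so $\mathcal J$ is strongly $\sigma$-closed by definition; and starting from $p_0=\emptyset$, clause (2) forces each $q\in\mathcal J$ to be the union of an $\omega$-chain from $\I$, hence to lie in $\I^\downarrow$, which yields the single-step extension via back-and-forth in $\I$. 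That every $p\in\I$ extends to some $q\in\mathcal J$ is then a short elementary-submodel construction: take countable $M\prec H_\tau$ containing $p$ and the enumeration, and build an increasing chain in $\I\cap M$ that absorbs each element of $M\cap\omega_2$ and decides compatibility with each $p_\alpha$ for $\alpha\in M$; its union has $D_q=M\cap\omega_2$ and satisfies (1)--(2) by elementarity. The elementary submodel is doing, in one stroke, the coherence work your unspecified invariant was meant to do.
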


\begin{proof} Let $\I$ be a $\sigma$-closed $\omega_1$-back and forth set of partial
isomorphisms between $\A$ and $\B$. We build another $\omega_1$-back and
forth set $\mathcal J$ which is strongly $\sigma$-closed.
We may assume that the base set of both $\A$ and $\B$ is $\omega_2$.
Since $\I$ consists of countable partial functions from $\omega_2$
to $\omega_2$, by ${\rm CH}$ it follows that it is of cardinality $\omega_2$.
Let us fix an enumeration $\{ p_\alpha : \alpha <\omega_2\}$ of $\I$.
We may assume that the empty function belongs to $\I$ and is enumerated
as $p_0$.
We let $q\in \mathcal J$ if $q$ is a permutation of a countable subset
$D_q$ of $\omega_2$ containing $0$ and the following hold:

\begin{enumerate}
\item if $\alpha \in D_q$ then $\dom {p_\alpha} \cup \ran {p_\alpha} \subseteq D_q$,
\item if $\alpha \in D_q$ and $p_\alpha \subseteq q$ then for every
$\xi \in D_q$ there is $\beta \in D_q$ such that $p_\alpha \subseteq p_\beta\subseteq q$,
and $\xi \in \dom {p_\beta} \cap \ran {p_\beta}$.
\end{enumerate}

Note that if $q\in \mathcal J$ then, by (2) and the fact that $0\in D_q$,
we can find a sequence $(\alpha_n)_n$ of elements of $D_q$ such that
$p_{\alpha_0}\subseteq p_{\alpha_1}\subseteq \ldots \subseteq p_{\alpha_n}\subseteq \ldots$,
and $q=\bigcup_n p_{\alpha_n}$.  Since each $p_{\alpha_n}$ is a countable partial isomorphism from $\A$
to $\B$, then so is $q$. Moreover, since $\I$ is $\sigma$-closed there is
$p\in \I$ such that $q\subseteq p$. Note also that $\mathcal J$ is strongly $\sigma$-closed.
In order to show that $\mathcal J$ has the $\omega_1$-back and forth property
it suffices to show the following.

\begin{clai}
For every $p \in \I$ there is $q\in \mathcal J$ such that $p\subseteq q$.
\end{clai}

\begin{proof} Fix a sufficiently large regular cardinal $\tau$ and a countable
elementary submodel $M$ of $H_\tau$ containing $p$ and the enumeration of $\I$.
Fix an enumeration $\{ \alpha_n : n< \omega\}$ of $M\cap \omega_2$. We define an increasing sequence
$(r_n)_n$ of elements of $\I \cap M$ as follows. Let $r_0=p$.
Suppose we have defined $r_n$. By the fact that $\I$ is an $\omega_1$-back and forth
set and $M$ is elementary, we can find $r_{n+1}\in M\cap \I$ extending $r_n$
such that:

\begin{enumerate}
\item[(a)] $r_{n+1}$ either extends $p_{\alpha_n}$ or is incompatible with it,
\item[(b)] $\alpha_n \in \dom {r_{n+1}}\cap \ran {r_{n+1}}$.
\end{enumerate}

This completes the definition of the sequence $(r_n)_n$. Let $q=\bigcup_n r_n$.
Clearly, $q$ is a permutation of $M\cap \omega_2$, i.e. $D_q=M\cap \omega_2$.
We check that $q\in \mathcal J$. Condition (1) is satisfied by elementary of $M$.
To see that condition (2) is satisfied consider some $\alpha,\xi \in D_q$
such that $p_\alpha \subseteq q$. Let $k$ and $l$ be such that $\alpha =\alpha_k$
and $\xi =\alpha_l$. Choose some $n>k,l$. Then $p_\alpha \subseteq r_n$
and $\xi \in \dom {r_n}\cap \ran {r_n}$. By elementary of $M$ there is $\beta \in D_q$
such that $r_n=p_\beta$. Then $\beta$ witnesses condition (2) for $\alpha$ and $\xi$.
\end{proof}
This completes the proof that $\mathcal J$ is a strongly $\sigma$-closed
$\omega_1$-back and forth set of partial isomorphisms between $\A$ and $\B$.
\end{proof}

We now turn to the proof of Theorem~\ref{main}.
We work in a model of ${\rm ZFC +CH}$ in which there is a simplified $(\omega_1,1)$-morass
$\morass$ which is stationary and satisfies the $\aleph_2$-antichain condition.
Let $\F =\F(\morass)$ be the family defined in Definition~\ref{F(morass)}.
Our plan is to define one structure $\C$ and two distinct elements $a$
and $b$ of $\C$ and let $\A = (\C, a)$ and $ \B = (\C, b)$.
$\C$ will consist of two parts, one is $\omega_2$ with the usual ordering.
Its only role is to ensure certain amount of rigidity of $\C$.
The second part of $\C$ consists of layers indexed by countable subsets
of $\omega_2$. Given  $u\in [\omega_2]^{\omega}$ let
$$
\F_u=\{f\in\F\ :\ \dom f=u\}.
$$
We let $\G_u$ be $[\F_u]^{<\omega}$. Since we wish these structures to be disjoint
and $\emptyset$ belongs to all them, we will replace $\emptyset$ in
$\G_u$ by another object, which we denote by $\emptyset_u$, such that
the $\emptyset_u$ are all distinct.
We still denote the modified structure  by $\G_u$. Let
$\G = \bigcup \{ \G_u : u \in [\omega_2]^{\omega}\}$. For $a\in \G$
we let $u(a)$ be the unique $u$ such that $a\in \G_u$.
The base set of $\C$ will be
$$
C= \omega_2 \cup \G.
$$
We now describe the language of $\C$. First, we will have
two binary relation symbols, $\leq$ and $E$.
The interpretation $\leq^{\C}$ of $\leq$ will be the usual ordering on $\omega_2$.
The interpretation of $E$ is as follows:
$$
(\alpha,a)\in E^{\C} \mbox{ iff } \alpha <\omega_2, a\in \G \mbox{ and } \alpha \in u(a).
$$
This guarantees that any  isomorphism of $\C$ is the identity on $\omega_2$ and
maps each $\G_u$ to itself.
We now put some structure on the $\G_u$. Note that $(\G_u,\Delta)$ is a Boolean
group, where $\Delta$ denotes the symmetric difference. We will keep
only the affine structure of this group, i.e. we want
the automorphisms of $\G_u$ to be precisely the shifts
by some member of $\G_u$, i.e. maps of the form:
$$
x \mapsto x \Delta a,
$$
for some fixed element $a$ of $\G_u$.
In order to achieve this, we will add countably many binary relation symbols
$R_{n,i}$, for $i=0,1$ and $n<\omega$. In each $\G_u$ we will interpret
these relation symbols as follows. First, we index the members of $\F_u$ by elements of
$2^\omega$, say  $\F_u = \{ f_x^u : x\in 2^\omega\}$.
If $a,b\in \G_u$ and $a\Delta b$ is a singleton, say $\{f^u_x\}$,
for each $n$ and $i$, we let
$$
R^{\C}_{n,i}(a,b) \mbox{ if and only if } x(n)=i.
$$
Otherwise, no relation between $a$ and $b$ holds. Also, if $u \neq v$ then
no relation $R_{n,i}^{\C}$ holds between elements of $\G_u$ and $\G_v$.
We also need to connect the different layers of our structure. Suppose $u,v\in [\omega_2]^{\omega}$
and $u\subseteq v$. We define a homomorphism $\pi_{u,v}:\G_v\rightarrow \G_u$
as follows. First, for $f\in \F_u$  we let
 $\pi_{u,v}(\{ f\})=\{ f\restriction u\}$. Then we extend $\pi_{u,v}$ to
a homomorphism of $\G_v$ to $\G_u$.
Note that, in general, $\pi_{u,v}(a)$ may be different from  $\{f\restriction u\ :\ f\in a\}$, since
there may be cancelation, i.e. there could exist $f,f'\in a$ with $f\neq f'$ but $f\restriction u=f'\restriction u$.
Now we add a binary relation symbol $S$ and we let:
$$
S^{\C}(a,b) \mbox{ iff } [a,b\in \G,  u(a) \subseteq u(b) \mbox{ and }  \pi_{u(a),u(b)}(b)=a].
$$
This guarantees the following: if $\tau$ is an automorphism of our structure $\C$
then, for each layer $u$, $\tau$ is the shift by some $a_u\in \G_u$ and
if $u\subseteq v$ then $\pi_{u,v}(a_v)=a_u$.
This completes the definition of the structure $\C$.

Now, we turn to the definition of $\A$ and $\B$. Recall that $\ple$ has
a winning strategy, say $\sigma$, in the persistency game on $\F$.
Consider the play of length $\omega$ in which, at stage $n$, player $\pla$
plays $n$ and player $\ple$ responds by following $\sigma$.
Let $p^*$ be the resulting position after $\omega$ moves and
let $f^*$ be the corresponding function.
So, $f^*\in \F_\omega$.
Now, we introduce a new constant symbol, $c$. Then we let $\A$  be the expansion
of $\C$ obtained by interpreting $c$ as $\emptyset_\omega$ and $\B$
the expansion of $\C$ in which we interpret $c$ as $\{ f^*\}$.

\begin{lemma}\label{9}
$\A \equiv_{\aleph_1,\aleph_1} \B$.
\end{lemma}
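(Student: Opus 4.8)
The plan is to invoke the Proposition characterizing $\A\equiv_{\aleph_1,\aleph_1}\B$ as the existence of a $\sigma$-closed forcing notion $\Pcal$ with $\force_{\Pcal}\A\cong\B$, and to build such a $\Pcal$ from the winning strategy $\sigma$ for $\ple$ in the persistency game $\mathcal{G}_{\omega_1}(\F)$. First I would record what an isomorphism $\A\to\B$ amounts to. Because of the ordering $\leq$ and the relation $E$, any automorphism of $\C$ is the identity on $\omega_2$ and maps each layer $\G_u$ to itself; because we retained only the affine structure of $\G_u$, it acts on $\G_u$ as a shift $x\mapsto x\Delta a_u$ for some $a_u\in\G_u$; and the relation $S$ forces the coherence $\pi_{u,v}(a_v)=a_u$ whenever $u\subseteq v$. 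An isomorphism carrying $\A=(\C,\emptyset_\omega)$ to $\B=(\C,\{f^*\})$ is thus precisely a coherent family $(a_u)_{u\in[\omega_2]^\omega}$ of shifts with $a_\omega=\{f^*\}$. Such a family is provided by any total function $F\colon\omega_2\to\omega_1$ extending $f^*$ all of whose countable restrictions lie in $\F$: simply set $a_u=\{F\restriction u\}$, which is coherent since $\pi_{u,v}(\{F\restriction v\})=\{F\restriction u\}$, and which satisfies $a_\omega=\{f^*\}$ because $f^*$ has domain $\omega$.

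Such an $F$ need not exist in $V$ (indeed its restrictions would form a $\sigma$-closed persistent subfamily of $\F$, excluded by Lemma~\ref{sigma-persistent}), so I would add one generically. Let $\Pcal$ consist of the countable positions in $\mathcal{G}_{\omega_1}(\F)$ in which $\ple$ follows $\sigma$ and which extend the initial segment $p^*$ defining $f^*$, ordered by end-extension. Since a countable increasing union of such positions is again a position of countable length in which $\ple$ follows $\sigma$, the forcing $\Pcal$ is $\sigma$-closed; here it is essential that $\sigma$ is winning, so that $\ple$ never gets stuck and the function accumulated at every countable stage is (a subfunction of) a member of $\F$. For each $\alpha<\omega_2$ the set of conditions in whose play $\alpha$ has been enumerated by $\pla$ is dense, again because $\sigma$ is winning; hence the generic play yields a total $F\colon\omega_2\to\omega_1$ with $f^*\subseteq F$. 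As $\Pcal$ is $\sigma$-closed it adds no new countable sets of ordinals, so $[\omega_2]^\omega$, the layers $\G_u$, and the family $\F$ are computed the same in $V$ and in $V[G]$; and any countable $u\subseteq\omega_2$ is, by genericity, covered by the domain of some condition in $G$, whence $F\restriction u$ is a subfunction of a member of $\F$ and so, since $\F$ is closed under subfunctions, $F\restriction u\in\F$ for every such $u$.

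Finally I would verify, working in $V[G]$, that the map $\tau$ which is the identity on $\omega_2$ and acts on each $\G_u$ by $x\mapsto x\Delta\{F\restriction u\}$ is an automorphism of $\C$ carrying $\emptyset_\omega$ to $\{f^*\}$, i.e. an isomorphism of $\A$ onto $\B$. Preservation of $\leq$ and $E$ is immediate since $\tau$ fixes $\omega_2$ pointwise and preserves layers; preservation of each $R_{n,i}$ follows because the symmetric difference $\tau(a)\Delta\tau(b)=a\Delta b$ is unchanged by a common shift; and preservation of $S$ is exactly the coherence $\pi_{u,v}(\{F\restriction v\})=\{F\restriction u\}$ together with the fact that $\pi_{u,v}$ is a group homomorphism. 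Thus $\force_{\Pcal}\A\cong\B$, and the Proposition yields $\A\equiv_{\aleph_1,\aleph_1}\B$. The main point requiring care is the $\sigma$-closure of $\Pcal$ and the absoluteness it provides: everything hinges on $\sigma$ being winning, which guarantees that limits of plays remain legal positions and hence that the relevant unions stay inside $\F$ — precisely the content of strategic persistence (Lemma~\ref{strategic}).
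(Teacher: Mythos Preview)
Your argument is correct and is a genuine alternative to the paper's proof. The paper works directly inside $\EF_{\aleph_1}^{\aleph_1}(\A,\B)$: at stage $\xi$, player $\ple$ collects the countable set $u_\xi$ determined by $\pla$'s move, simulates the persistency game using $\sigma$ to extend the running function to some $f_\xi\in\F_{u_\xi}$, and plays the shift $a\mapsto a\Delta\{f_\xi\restriction u(a)\}$. You instead invoke Proposition~1.2 and force with the tree of $\sigma$-plays extending $p^*$, obtaining a single total $F$ whose restrictions give all the shifts at once. The mathematical content is the same---a coherent family of shifts coming from $\sigma$---but your packaging makes the global picture (one function $F$ generating the whole automorphism) more transparent, at the cost of the extra absoluteness bookkeeping. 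The paper's version is more self-contained since it does not rely on the forcing characterization.

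One small correction: the $\sigma$-closure of your poset $\Pcal$ does \emph{not} actually use that $\sigma$ is winning. A countable union of positions following $\sigma$ is automatically a position following $\sigma$, since every move in the union already occurred in one of the pieces; no new move has to be supplied at the limit. Where you genuinely need $\sigma$ to be winning is in the density arguments---extending a condition so that a given $\alpha$ (or all of a given countable $u$) gets played requires $\ple$ to have a legal response at the next (possibly limit) stage, and that is exactly what ``winning'' guarantees. Your proof goes through unchanged once this attribution is adjusted.
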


\begin{proof} We describe informally a winning strategy for player $\ple$ in
${\rm EF}_{\aleph_1}^{\aleph_1}(\ma,\mb)$. Suppose player $\pla$ starts by
playing $A_0$ and $B_0$, where $A_0$ is a countable subset of $\A$ and
$B_0$ is a countable subset of $\B$.  Since the base sets of $\A$ and $\B$ are the same,
we may assume $A_0=B_0$. Let's call this set $C_0$.
Let $C_0'= C_0\cap \omega_2$ and $C_0^{''}=C_0\cap \G$.
Now, let $U_0=\{ u(a) : a\in C_0''\}$. Then, $U_0$ is a countable collection
of countable subsets of $\omega_2$. Let $u_0= \bigcup U_0$.
Then player $\ple$ simulates a play in the persistency game on $\F$
continuing the play $p^*$ in which player $\pla$ enumerates the elements of $u_0\setminus \omega$
in some order after the $\omega$-th move and $\ple$ uses her winning strategy $\sigma$.
Let $p_0$ be the resulting position and $f_0$ the corresponding function.
Then $f_0\in \F_{u_0}$. Let $\varphi_0$ be the function on $C_0''$
defined by:
$$
\varphi_0(a) = a \Delta \{ f_0\restriction u(a)\},
$$
and let $\psi_0 = \varphi_0 \cup {\rm id}_{C_0'}$.
Note that $\psi_0$ is an involution and $\psi_0(\emptyset_{\omega})=\{ f^*\}$,
since $f_0$ extends $f^*$. Thus, we can consider $\psi_0$ as
a partial isomorphism from $\A$ to $\B$ such that $A_0\subseteq \dom {\psi_0}$
and $B_0\subseteq \ran {\psi_0}$. Player $\ple$ then plays $\psi_0$
as her first move in ${\rm EF}_{\aleph_1}^{\aleph_1}(\ma,\mb)$.

In general, in the $\xi$-th move of ${\rm EF}_{\aleph_1}^{\aleph_1}(\ma,\mb)$
player $\pla$ plays a countable subset $A_\xi$ of $\A$ and a countable
subset $B_\xi$ of $\B$. We may assume that $A_\xi =B_\xi$ and we call
this set $C_\xi$.
We let $C^{'}_\xi=C_\xi \cap \omega_2$ and $C_\xi^{''}=C_\xi \cap \G$.
We let $U_\xi = \{ u(a) : a\in C_\xi^{''}\}$ and
$$
u_\xi = \bigcup \{ u_\eta : {\eta <\xi}\} \cup \bigcup U_\xi.
$$
Player $\ple$ simulates a play $p_\xi$ in the persistency game on $\F$ which
extends the $p_\eta$, for $\eta <\xi$, such that after $\bigcup_{\eta <\xi}p_\eta$
player $\pla$ continues by enumerating in some order the elements of
$u_\xi \setminus \bigcup_{\eta <\xi}u_\eta$ and player $\ple$ plays by following
her strategy $\sigma$.  Let $f_\xi$ be the function
corresponding to $p_\xi$. Notice that $f_\xi$ extends $f_\eta$, for
$\eta <\xi$. Now, let $\varphi_\xi$ be the function defined on $C_\xi^{''}$  by
$$
\varphi_\xi(a)= a\Delta \{ f_\xi \restriction u(a)\}.
$$
Finally, let
$$
\psi_\xi = \bigcup_{\eta <\xi}\psi_\eta \cup \varphi_{\xi} \cup {\rm id}_{C_\xi^{'}}.
$$
It is easy to see that $\psi_\xi$ extends $\psi_\eta$, for $\eta <\xi$.
Since $\sigma$ is a winning strategy for player $\ple$ in the persistency
game on $\F$, player $\ple$ can continue playing like this for $\omega_1$ moves.
Therefore, she has a winning strategy in ${\rm EF}_{\aleph_1}^{\aleph_1}(\ma,\mb)$,
as required.
\end{proof}

\begin{lemma}\label{10}$\ma\not\simeq^{p}_{\aleph_1,\aleph_1}\mb$.
\end{lemma}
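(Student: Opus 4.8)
The plan is to assume, toward a contradiction, that $\A\simeq^p_{\aleph_1,\aleph_1}\B$, and to manufacture from this a $\sigma$-closed persistent subfamily of $\F=\F(\morass)$, which is impossible by Lemma~\ref{sigma-persistent}. Since we are assuming CH, Lemma~\ref{sigma-closure} lets me fix a \emph{strongly} $\sigma$-closed $\omega_1$-back-and-forth set $\mathcal J$ for $\A$ and $\B$, so that $\bigcup_n p_n\in\mathcal J$ whenever $(p_n)_n$ is increasing in $\mathcal J$. Everything below is carried out with this fixed $\mathcal J$.

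First I would analyze the shape of a single $p\in\mathcal J$. Because $(\omega_2,\leq^{\C})$ is a well-order and well-orders are rigid, after extending $p$ within $\mathcal J$ so that the domain and range of $p\restriction\omega_2$ coincide with a common countable set $D$, the map $p\restriction\omega_2$ is forced to be the identity on $D$; using the relation $E$ and enlarging $D$ as needed, $p$ then preserves each layer it meets, mapping $\G_u$ into $\G_u$ for the countably many $u\subseteq D$ in play. On each such layer the only partial isomorphisms of the (affine) reduct are translations, so $p$ acts on $\G_u$ as $x\mapsto x\,\Delta\,a_u$ with $a_u=p(\emptyset_u)$; the constant $c$ forces $a_\omega=\{f^*\}$, and the relation $S$ forces the coherence $\pi_{u,v}(a_v)=a_u$ for $u\subseteq v$. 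The point is that on the top layer $D$ the translation is a \emph{finite set} $a_D=\{h_1,\dots,h_m\}$ of genuine members of $\F_D\subseteq\F$; I call these the \emph{threads} of $p$, and I let $\mathcal T$ be the collection of all threads of all (extended) members of $\mathcal J$, a subfamily of $\F$. This family is persistent: given a thread $h$ of $p$ and $\alpha<\omega_2$, extend $p$ within $\mathcal J$ to $q$ with $\alpha\in D_q$ and $\emptyset_{D_q}\in\dom q$; since $h\in a_{D_p}=\pi_{D_p,D_q}(a_{D_q})$, some thread $h'$ of $q$ satisfies $h'\restriction D_p=h$, so $h'\in\mathcal T$ extends $h$ and has $\alpha\in\dom h'$. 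I record the companion limit fact: if $(u_n)_n$ increases to $w$ and each $a_{u_n}$ is a single thread, then $a_w$ is a single thread, because for large $n$ the projection $\pi_{u_n,w}$ is injective on the finite set $a_w$, which must then be a singleton; thus unions of coherent single threads are again threads.

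The main work lies in contradicting Lemma~\ref{sigma-persistent} by running its diagonal construction for the persistent family $\mathcal T$ while simultaneously tracking witnesses in $\mathcal J$. Working inside a countable $M\prec H_\tau$ with $M\cap\omega_2\in\mathcal S(\morass)$ and $\zeta=\sup(M\cap\omega_2)$, I use persistence of $\mathcal T$ to attach to each $\delta$ a thread through $\delta$, pigeonhole a single value $\alpha$ on an $\aleph_2$-sized set, and then apply the $\aleph_2$-antichain condition through Lemma~\ref{extension} repeatedly, exactly as in Lemma~\ref{sigma-persistent}, to produce an increasing chain of threads $g^n_{\delta_n}$ with $g^n_{\delta_n}(\delta_n)=\alpha$ and $\delta_n\nearrow\zeta$. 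The new ingredient is that, since each $g^n_{\delta_n}$ is obtained as a thread of an \emph{extension} of the previous witness, I can arrange an increasing chain $(q_n)_n$ in $\mathcal J$ with $g^n_{\delta_n}$ a single thread of $q_n$. Strong $\sigma$-closure then gives $q_\infty=\bigcup_n q_n\in\mathcal J$, and by the limit fact above $\bigcup_n g^n_{\delta_n}$ is a single thread of $q_\infty$, hence lies in $\F$. But $\{\delta_n\}$ is $\preceq$-unbounded (as $M\cap\omega_2\in\mathcal S(\morass)$), so $\bigcup_n g^n_{\delta_n}$ takes the value $\alpha$ on a $\preceq$-unbounded set, violating clause (2) of Definition~\ref{F(morass)}. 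This contradiction shows that no strongly $\sigma$-closed back-and-forth set exists, whence $\A\not\simeq^p_{\aleph_1,\aleph_1}\B$.

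The step I expect to be the main obstacle is the reconciliation in the last paragraph: individual members of $\mathcal J$ may act by non-singleton translations, so there is no canonical ``value'' to read off a single $p$, and one must instead work with threads and ensure that the thread chosen at each stage really extends the previous one \emph{and} is realized by an extension of the previous witness, so that strong $\sigma$-closure applies and the limit translation collapses to the single offending function. Keeping the value $\alpha$ fixed along the diagonal---which in Lemma~\ref{sigma-persistent} is guaranteed by the forcing effect of Lemma~\ref{extension}---while simultaneously keeping the witnesses $(q_n)_n$ increasing is the delicate bookkeeping at the heart of the argument.
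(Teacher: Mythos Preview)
Your overall architecture matches the paper's: reduce to a strongly $\sigma$-closed $\mathcal J$, show each $\psi\in\mathcal J$ acts as a coherent system of shifts, and then run the diagonal construction from Lemma~\ref{sigma-persistent} inside an $M$ with $M\cap\omega_2\in\mathcal S(\morass)$. The gap is in the ``new ingredient'' paragraph. You only track a \emph{single} thread $g^n_\delta$ at each stage, and this is not enough to conclude that the witnesses $q_n=q^n_{\delta_n}$ form an increasing chain in $\mathcal J$. In the Lemma~\ref{sigma-persistent}-style recursion, $g^{n+1}_{\delta_{n+1}}$ arises by extending $g^n_{\delta_{n+1}}$, whose witness is $q^n_{\delta_{n+1}}$, not $q^n_{\delta_n}$; so the ``previous witness'' you extend is the wrong one. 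Knowing that one thread of $q^{n+1}_\delta$ extends one thread of $q^n_{\delta_n}$ says nothing about the \emph{action} of $q^{n+1}_\delta$ on the domain of $q^n_{\delta_n}$, because that action is the symmetric difference of \emph{all} threads. Thus you have no reason for $q^{n+1}_{\delta_{n+1}}\supseteq q^n_{\delta_n}$, strong $\sigma$-closure cannot be invoked, and you have no $q_\infty$ whose thread would place $\bigcup_n g^n_{\delta_n}$ in $\F$.

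The paper closes exactly this gap by first stabilizing the number of threads: using $\sigma$-closure it finds $\psi_0$ and an integer $n$ with $|a_\psi|=n$ for all $\psi\supseteq\psi_0$ in $\mathcal J$. It then applies Lemma~\ref{extension} to the full $n$-tuple $\langle f^0_{\delta,k},\dots,f^{n-1}_{\delta,k}\rangle$, so that after extending, \emph{every} thread of $\psi_{\delta,k+1}$ agrees with the corresponding thread of $\psi_{\delta(k),k}$. This forces the shift determining $\psi_{\delta,k+1}$ to restrict correctly on each layer of $\psi_{\delta(k),k}$, hence $\psi_{\delta,k+1}\supseteq\psi_{\delta(k),k}$, and now the chain $(\psi_{\delta(k),k})_k$ really is increasing. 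Your ``limit fact'' is also stated only for singleton shifts; the correct general statement (eventual injectivity of $\pi_{u_n,w}$ on the finite $a_w$ shows that the increasing chosen threads are eventually restrictions of a fixed thread of the limit) is true, but you never put yourself in a position to use it. You acknowledge this tension in your last paragraph; the missing idea is precisely the stabilization of $n$ and the simultaneous control of all $n$ threads via Lemma~\ref{extension}.
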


\begin{proof} This is similar to the proof of Lemma~\ref{sigma-persistent}.
Suppose $\Omega$ is a $\sigma$-closed family of partial isomorphisms from $\A$
to $\B$ with the back-and-forth property. By Lemma~\ref{sigma-closure}, we
may assume that $\Omega$ is strongly $\sigma$-closed.
Let $\psi$ be a member of $\Omega$. Then, the domain of $\psi$ is a countable subset $A_\psi$ of $\A$
and the range is a countable subset $B_\psi$ of $\B$.
Let $A_\psi^{'}=A_\psi \cap \omega_2$ and let $A_\psi^{''}=A_\psi \cap \G$.
Since $\Omega$ has the back and forth property, it is easy to see that $\psi$
has to be the identity on $A_\psi^{'}$ and preserve the layers of $\G$.
Let $U_\psi =\{ u(a) : a \in A_\psi^{''}\}$.
Since $\Omega$  is also strongly $\sigma$-closed, the set of $\psi \in \Omega$ such that $U_\psi$
is directed under inclusion is dense in $\Omega$.
By replacing $\Omega$ by this set we may assume that $U_\psi$ is directed,
for all $\psi \in \Omega$. Let $u(\psi) =\bigcup U_\psi$, for $\psi \in \Omega$.
For $u\in U_\psi$ let $A_{\psi,u} = A_\psi^{''}\cap \G_u$.
It follows that $\psi \restriction A_{\psi,u}$ has to be the shift
by some element of $\G_u$, say $a_{\psi,u}$. Moreover, if
$u,v\in U_\psi$ and $u\subseteq v$ then $\pi_{u,v}(a_{\psi,v})=a_{\psi,u}$.
Each $a_{\psi,u}$ is finite and since $U_\psi$ is directed under inclusion
and $\psi$ can be extended to a function $\rho$ in $\Omega$ which is
defined on some point of $\G_{u(\psi)}$, it follows that there exists
$a_\psi \in \G_{u(\psi)}$ such that $\psi \restriction A_{\psi,u}$ is the shift by
$\pi_{u,u(\psi)}(a_\psi)$, for every $u\in U_\psi$.
Let $n_\psi$ be the cardinality of $a_\psi$. Note that $n_\psi >0$,
since $\psi(\emptyset_\omega) = \{ f^*\}$, so $\psi$ cannot be
the identity on its domain.  Moreover, since $\Omega$ is $\sigma$-closed
and $n_\psi \leq n_\rho$, for every $\psi,\rho \in \Omega$ such that $\psi \subseteq \rho$,
there is $\psi_0 \in \Omega$ and an integer $n$ such that $n_\psi =n$,
for all $\psi \in \Omega$ such that $\psi_0\subseteq \psi$.
We can replace $\Omega$ by  $\{\psi \in \Omega : \psi_0\subseteq \psi\}$,
so without loss of generality we may assume that $n_\psi =n$, for all $\psi \in \Omega$.

Now, we proceed as in the proof of Lemma~\ref{sigma-persistent}. We fix
a sufficiently large regular cardinal $\tau$.
Since ${\mathcal S}(\morass)$ is stationary in $[\omega_2]^\omega$,
we can find a countable elementary submodel $M$ of $H_\tau$ containing all the relevant
objects such that $M\cap \omega_2\in {\mathcal S}(\morass)$.
Let $\zeta = \sup (M\cap \omega_2)$ and fix an increasing sequence
$\{\z_n\}_n$ of ordinals in $M$ which is cofinal  in $\z$.
We now work in $M$. For each  $\delta<\omega_2$, fix $\psi_{\delta,0} \in \Omega$ such that
$\delta \in u({\psi_ {\delta,0}})$. Let us enumerate $a_{\psi_{\delta,0}}$
as, say $\{ f_{\delta,0}^0,\ldots f_{\delta,0}^{n-1}\}$. We can find $\alpha<\omega_1$ and
$X_0\subseteq \omega_2\setminus\zeta_0$ of size $\aleph_2$
such that $f_{\delta,0}^0(\delta)= \alpha$, for all $\delta \in X_0$.
Since $\morass$ satisfies the $\aleph_2$-antichain condition, by  Lemma~\ref{extension}
we can fix $\delta(0)\in X_0$ and $X_1\subseteq X_0\setminus\zeta_1$
of size $\aleph_2$ such that, for all $\delta \in X_1$, and all $i<n$,
any extension of $f_{\delta,0}^i$ to a function in $\F$ which is defined on
$\dom {f_{\delta(0),0}^i}$ must extend $f_{\delta(0),0}^i$.
For each $\delta \in X_1$ fix some $\psi_{\delta,1}\in \Omega$ which extends
$\psi_{\delta,0}$ and is defined on $A_{\psi_{\delta(0),0}}$.
Then $\psi_{\delta,1}$ must be the identity on $A_{\psi_{\delta(0),0}}^{'}$
and
$$
\pi_{u({\psi_{\delta,0}}),u({\psi_{\delta,1}})}(a_{\psi_{\delta,1}}) = a_{\psi_{\delta,0}}.
$$
Since $a_{\psi_{\delta,1}}$ has the same size as $a_{\psi_{\delta,0}}$, we can enumerate it as
$\{ f_{\delta,1}^0,\ldots f_{\delta,1}^{n-1}\}$ such that $f_{\delta,1}^i$
extends $f_{\delta,0}^i$, for all $i<n$. Moreover, $f_{\delta,1}^i$ is defined
on $\dom {f_{\delta(0),0}^i}$ and so it must extend $f_{\delta(0),0}^i$.
In other words, $f_{\delta(0),0}^i\cup f_{\delta,0}^i\subseteq f_{\delta,1}^i$,
for all $i<n$. It follows that $\psi_{\delta,1}$ extends $\psi_{\delta(0),0}$,
for all $\delta \in X_1$.
By Lemma~\ref{extension} again, we can fix $\delta(1)\in X_1$ and $X_2\subseteq X_1\setminus\zeta_2$
of size $\aleph_2$ such that, for all $\delta\in X_2$ and all  $i<n$,
any extension of $f_{\delta,1}^i$ to a function in $\F$ which is defined on
$\dom {f_{\delta(1),1}^i}$ must extend $f_{\delta(1),1}^i$.
For each $\delta \in X_2$ fix some $\psi_{\delta,2}\in \Omega$ which extends
$\psi_{\delta,1}$ and is defined on $A_{\psi_{\delta(1),1}}$.
As before, $\psi_{\delta,2}$ must be the identity on $A_{\psi_{\delta,2}}^{'}$
so it must agree with $\psi_{\delta(1),1}$ on $A_{\psi_{\delta(1),1}}^{'}$
Also, we can enumerate $a_{\psi_{\delta,2}}$ as $\{f_{\delta,2}^0,\ldots,f_{\delta,2}^{n-1}\}$
such that $f_{\delta(1),1}^i \cup f_{\delta,1}^i \subseteq f_{\delta,2}^i$.
We conclude that $\psi_{\delta,2}$ extends $\psi_{\delta(1),1} \cup \psi_{\delta,1}$, for all $\delta \in X_2$.
Continuing in this way we get  an increasing sequence $(\delta(k))_k$ of ordinals from $M$,
a decreasing sequence   $(X_k)_k$ of subsets of $\omega_2$  of size $\aleph_2$, and,  for each $k$ and
$\delta \in X_k$, $\psi_{\delta,k} \in \Omega$ and an enumeration $\{f_{\delta,k}^0,\ldots,f_{\delta,k}^{n-1}\}$
of  $a_{\psi_{\delta,k}}$ such that:

 \begin{enumerate}
 \item $\delta(k) \geq \zeta_k$, for all $k$,
 \item $\psi_{\delta(k),k} \cup \psi_{\delta,k} \subseteq \psi_{\delta,k+1}$, for all $\delta \in X_{k+1}$.
\item $f_{\delta (k),k}^i \cup f_{\delta,k}^i \subseteq f_{\delta,k+1}^i$, for all  $i<n$ and all $\delta \in X_{k+1}$.
\end{enumerate}

Now, $(\psi_{\delta(k),k})_k$ is an increasing sequence of members of $\Omega$ and since $\Omega$
is $\sigma$-closed there is $\rho \in \Omega$ extending all the $\psi_{\delta(k),k}$.
It follows that there is an enumeration $\{ f^0,\ldots, f^{n-1}\}$ of $a_\rho$
such that $f_{\delta(k),k}^i \subseteq f^i$, for each $i<n$ and $k<\omega$.
Recall that $f_{\delta,0}^0(\delta)=\alpha$, for all $\delta \in X_0$. Moreover,
$f_{\delta,0}^i \subseteq \ldots \subseteq f_{\delta,k}^i$, for all $i<n$ and $\delta \in X_k$.
It follows that $f^0(\delta(k))=\alpha$, for all $k$.
However, all the $\delta(k)$ belong to  $M\cap \omega_2$ and the sequence $(\delta(k))_k$
is cofinal in $\zeta$. Since $M\cap \omega_2$ belongs ${\mathcal S}(\morass)$ it follows
that this sequence is $\preceq$-unbounded. Therefore, $f^0$ violates condition (2)
of Definition~\ref{F(morass)} and so it cannot belong to $\F$, a contradiction.
\end{proof}

This completes the proof of Theorem~\ref{main}.
\qed

\section{Open questions}

We mention a couple of questions which remain open.

\begin{question}
 Is it consistent that $\equiv_{\aleph_1,\aleph_1}$ and $\simeq^p_{\aleph_1,\aleph_1}$
are equivalent for structures of size $\aleph_2$ in the context of {\rm CH} ?
\end{question}

\begin{question}
 Is it consistent that $\pisoaa$ is not transitive? This
would show that $\pisoaa$ is not the right concept, i.e. it does not
represent equivalence in some logic.
\end{question}

\bibliographystyle{plain}

\begin{thebibliography}{10}

\bibitem{MR58:27450}
M.~A. Dickmann.
\newblock {\em Large infinitary languages}.
\newblock North-Holland Publishing Co., Amsterdam, 1975.
\newblock Model theory, Studies in Logic and the Foundations of Mathematics,
  Vol. 83.

\bibitem{MR58:27456}
M.~A. Dickmann.
\newblock Deux applications de la m\'ethode de va-et-vient.
\newblock {\em Publ. D\'ep. Math. (Lyon)}, 14(2):63--92, 1977.

\bibitem{MR16:993e}
P.~Erd{\"o}s, L.~Gillman, and M.~Henriksen.
\newblock An isomorphism theorem for real-closed fields.
\newblock {\em Ann. of Math. (2)}, 61:542--554, 1955.

\bibitem{MR96j:03056}
Taneli Huuskonen.
\newblock Comparing notions of similarity for uncountable models.
\newblock {\em J. Symbolic Logic}, 60(4):1153--1167, 1995.

\bibitem{MR88a:03086}
Tapani Hyttinen.
\newblock Games and infinitary languages.
\newblock {\em Ann. Acad. Sci. Fenn. Ser. A I Math. Dissertationes}, (64):32,
  1987.

\bibitem{MR82k:03051}
Maaret Karttunen.
\newblock Infinitary languages {$N\sb{\infty \lambda }$} and generalized
  partial isomorphisms.
\newblock In {\em Essays on mathematical and philosophical logic (Proc. Fourth
  Scandinavian Logic Sympos. and First Soviet-Finnish Logic Conf.,
  Jyv\"askyl\"a, 1976)}, volume 122 of {\em Synthese Library}, pages 153--168.
  Reidel, Dordrecht, 1979.

\bibitem{MR57:2905}
David~W. Kueker.
\newblock Back-and-forth arguments and infinitary logics.
\newblock In {\em Infinitary logic: in memoriam Carol Karp}, pages 17--71.
  Lecture Notes in Math., Vol. 492. Springer, Berlin, 1975.

\bibitem{MR57:2907}
Mark Nadel and Jonathan Stavi.
\newblock {$L\sb{\infty }{}\sb{\lambda }$}-equivalence, isomorphism and
  potential isomorphism.
\newblock {\em Trans. Amer. Math. Soc.}, 236:51--74, 1978.

\bibitem{vv}
Jouko V\"a\"an\"anen and Boban Veli{\v c}kovi{\'c}.
\newblock Games played on partial isomorphisms.
\newblock {\em Archive for Mathematical Logic}.
\newblock 43 (2004), no. 1, 19Ð30.


\bibitem{MR2768176}
{Jouko V{\"a}{\"a}n{\"a}nen}.
 {\it Models and games}.
 {Cambridge Studies in Advanced Mathematics}.
 {132}.
{Cambridge University Press}.
{Cambridge}.
 {2011}.
 {xii+367 pages}.

\bibitem{MR85i:03162}
Dan Velleman.
\newblock Simplified morasses.
\newblock {\em J. Symbolic Logic}, 49(1):257--271, 1984.

\end{thebibliography}

\bibliographystyle{plain}
   \def\Dbar{\leavevmode\lower.6ex\hbox to 0pt{\hskip-.23ex \accent"16\hss}D}
  \def\cprime{$'$} \def\ocirc#1{\ifmmode\setbox0=\hbox{$#1$}\dimen0=\ht0
  \advance\dimen0 by1pt\rlap{\hbox to\wd0{\hss\raise\dimen0
  \hbox{\hskip.2em$\scriptscriptstyle\circ$}\hss}}#1\else {\accent"17 #1}\fi}

\end{document}